\pdfoutput=1
\documentclass[11pt]{article}

\usepackage[utf8]{inputenc} 
\usepackage[T1]{fontenc}    
\usepackage{hyperref}       
\usepackage{url}            
\usepackage{booktabs}       
\usepackage{amsfonts}       
\usepackage{nicefrac}       
\usepackage{microtype}      
\usepackage{wrapfig}
\usepackage{bm}
\usepackage{xcolor}
\usepackage{enumitem}
\usepackage[lined,boxed]{algorithm2e}
\usepackage{caption}
\usepackage{subcaption}
\usepackage{tabularx}
\usepackage{paralist}
\usepackage{graphicx}
\usepackage{amsthm}
\usepackage[notheorems]{dgleich-math}

\usepackage{ushort}
\newcommand{\cel}[1]{\ushort{#1}}

\newcommand{\celm}[1]{\cel{\mat{#1}}}

\newcommand{\cP}{\ensuremath{\cel{P}}}

\providecommand{\cmP}{\ensuremath{\celm{P}}}

\usepackage{floatrow}
\newfloatcommand{capbtabbox}{table}[][\FBwidth]

\hypersetup{colorlinks,citecolor=blue}

\usepackage[paper=letterpaper, top=1.0in, bottom=1.0in, left=1.0in, right=1.0in]{geometry}

\newtheorem{theorem}{Theorem}[section]
\newtheorem{lemma}[theorem]{Lemma}

\newtheorem{corollary}[theorem]{Corollary}

\newcommand{\given}

\newcolumntype{Y}{>{\raggedright\arraybackslash}X}
\newcolumntype{W}{>{\raggedleft\arraybackslash}X}
\newcolumntype{Z}{>{\centering\arraybackslash}X}

\graphicspath{{figure/}}

\begin{document}
\title{Multi-way Monte Carlo Method for Linear Systems}

\author {Tao Wu\\
Purdue University\\
wu577@purdue.edu
\and
David F. Gleich\\ 
Purdue University\\
dgleich@purdue.edu
}

\date{}


\maketitle
\thispagestyle{empty}

\begin{abstract}
We study the Monte Carlo method for solving a linear system of the form $\vx = \mH \vx + \vb$.
A sufficient condition for the method to work
is $\normof{\mH} < 1$, which greatly limits the usability of this method.
We improve this condition by proposing a new multi-way Markov random walk,
which is a generalization of the standard Markov random walk. Under our new framework we prove that
the necessary and sufficient condition for our method to work is the spectral radius $\rho(\mH^{+}) < 1$, which is a 
weaker requirement than $\normof{\mH} < 1$. In addition to solving more
problems, our new method
can work faster than the standard algorithm. In numerical experiments on
both synthetic and real world matrices, we demonstrate the effectiveness of our
new method.
\end{abstract}
\newpage
\setcounter{page}{1}

\section{Introduction}
\label{sec_intro}
The Monte Carlo method~\cite{wasow1952note} for solving a linear system uses a random walk to approximate the solution. This method has several advantages over traditional deterministic algorithms (e.g., Gaussian elimination and iterative methods) due to its unique characteristics. First, the Monte Carlo method can be highly effective when only modest accuracy is required, as is common for many problems on data such as PageRank computations~\cite{avrachenkov2007monte}. Second, it is well-known that Monte Carlo algorithms
are highly parallelizable~\cite{lebeau1999parallel,dietrich1996scalar}, thus they are ideal for modern paralleled computers or clusters. Third, Monte Carlo methods can identify only on a single component or a linear form of the solution, which is often all that is required in many applications~\cite{wang2008monte}. Last but not least, Monte Carlo methods have advantages when dealing with large linear systems ~\cite{halton1994sequential,dimov1998new} because they do not always require a full solution vector. 

\subsection{The standard Monte Carlo method}
Consider the following linear system:
\[
\vx = \mH \vx + \vb
\]
where $\mH \in \mathbb{R}^{n\times n}$ and $\vx, \vb \in \mathbb{R}^{n}$ and where our goal is to evaluate the functional $\langle \vh , \vx \rangle = \sum_{i=1}^{n} h_i x_i$. We could then use this primitive to compute the solution by evaluating the functional for each standard basis vector to get each single component of the solution. 

It is known that if the spectral radius $\rho(\mH) < 1$, then the Neumann Series
$\sum_{\ell=0}^{\infty} \mH^{\ell}\vb$ will converge to the solution vector $\vx$.
The Monte Carlo method uses this observation to create a Markov random walk $X_t$ on the state space $S = \{1,2,\cdots, n\}$ with initial probability
$\text{Pr}(X_0 = i) = p_i$
and transition probability $\text{Pr}(X_{\ell+1} = j \mid X_{\ell} = i) = P_{i,j}$, s.t.$h_i\neq 0 \Rightarrow p_i \neq 0$
and $H_{i,j}\neq 0 \Rightarrow P_{i,j}\neq 0$.

Let $\nu$ be a realization of the random walk: $k_0 \rightarrow k_1 \rightarrow k_2 \rightarrow \cdots \rightarrow k_{\ell} \rightarrow \cdots$.
A walk related weight and random variable can be calculated as
\begin{equation*}
\begin{aligned}
W_{\ell} &= \frac{h_{k_0}H_{k_0,k_1}H_{k_1,k_2}\cdots H_{k_{\ell-1},k_{\ell}}}{p_{k_0}P_{k_0,k_1}P_{k_1,k_2}\cdots P_{k_{\ell-1},k_{\ell}}} \quad \text{for }\ell=0,1,2,\cdots & \quad \text{ and } \quad & 
X(\nu) = \sum_{\ell = 0}^{\infty} W_{\ell}b_{k_{\ell}}.
\end{aligned}
\end{equation*}
Then it can be shown (for instance~\cite{dimov1998new}) that $\E[X] = \langle \vh,\vx \rangle$, and more specifically $\E[W_{\ell}f_{k_{\ell}}] = \langle \vh, \mH^{\ell}\vf \rangle$.

However the random walk model does not guarantee convergence ~\cite{ji2013convergence,wasow1952note}. According to the law of large numbers, a necessary and sufficient condition to estimate $\E[X]$ using the empirical mean value of $X$ is $\Var[X]<\infty$. Empirical studies~\cite{ji2013convergence,benzi13analysis} show that it is easy to have $\Var[X] = \infty$ 
even when the Neumann Series converges (i.e., $\rho(\mH)<1$). 

\subsection{Our Contributions}
In order to apply the Monte Carlo method, existing work~\cite{dimov2015new,srinivasan2010monte,wang2008monte} assumes $\normof{\mH} < 1$ (for the infinity norm $\normof{\mH} = \max_i \sum_j |H_{i,j}|$), which suffices to show $\Var[X] < \infty$, but which is a stronger condition than $\rho(\mH) < 1$. Although it is possible $\Var[X]<\infty$ when $\normof{\mH} \geq 1$, there is no easy way to check. To tackle this problem, we propose a multi-way Markov random walk which uses multiple transition matrices. At each step of the random walk, the transition matrix is constructed in a way akin to the \textit{Monte Carlo Almost Optimal (MAO)} framework~\cite{dimov2015new,ji2013convergence}. 
We prove that under this type of random walk, the new method always converges when $\rho(\mH^{+}) < 1$, where $\mH^{+}$ is the nonnegative matrix as $H^{+}_{ij} = |H_{ij}|$.
In addition, our new framework has the tendency to get the result faster than the standard method. One downside to our approach is that is cannot be implemented in a purely local fashion akin to the standard Monte Carlo method as it requires global work to build the multi-way walk. 

\subsection{Related Work}
Research on Monte Carlo Methods for linear systems can be divided into two classes: direct methods and hybrid methods. Direct methods
study the various techniques of using the Monte Carlo solvers themselves, for instances non-diagonal
splitting~\cite{srinivasan2010monte} and relaxation parameters~\cite{dimov1998new}.
Hybrid methods~\cite{halton1994sequential,evans2014monte,alexandrov2005parallel} use Monte Carlo as a black box combined with iterative techniques.
Examples of these works are Sequential Monte Carlo method~\cite{halton1994sequential} and 
synthetic-acceleration method~\cite{evans2014monte}. Also there are a variety of studies of the parallel implementation~\cite{dimov2001parallel,slattery2013parallel,alexandrov2005parallel}, real world application~\cite{wang2008monte,avrachenkov2007monte}, convergence analysis~\cite{ji2013convergence}, and spectral
analysis~\cite{slattery2015spectral}.

In this paper, we focus on the direct Monte Carlo procedure.  Our ideas can also be incorporated into the hybrid frameworks to better improve the performance. 

\section{Multi-way Markov Random Walk}
\label{sec_ranwalk}
In this section we generalize the idea of random walk for estimating the functional to using a hypermatrix of transitions to compute the estimate. Then we analyze the convergence of the simulations based on the variance of the relevant random variable. 

We use bold, upper-case letters such as $\mA$ to denote matrices,
and bold, lower-case letters such as $\vx$ to denote vectors.
Hypermatrices as in $\cmP$ are bold, underlined, upper-case letters.
We use letters with subscripts of indices to denote elements $x_i$ of a vector
 and $A_{i,j}$ of a matrix. For a mode-$3$ hypermatrix $\cmP$, its elements
 are denoted by $\cP^{(\ell)}_{i,j}$.

\subsection{Hypermatrix Transitions}

Instead of using a fixed transition matrix $\mP$ as in the classic Monte Carlo method in section~\ref{sec_intro}, we allow the random
walk to vary transition matrices with each step. An $m-$way random walk can be interpreted as walking via $m$ different transition matrices periodically in a round-robin way. 
Formally, we define a $m-$way Markov random walk $(\vp,\cmP)$ as $Z_t$: $k_0 \rightarrow k_1 \rightarrow k_2 \rightarrow \cdots \rightarrow k_i \rightarrow \cdots$, where the initial probability follows $\vp$, and the transition probability follows a hypermatrix $\cmP$:
\begin{equation}
\label{equ_transition}
\begin{aligned}
&\text{Pr}(k_0 = i) = p_i\\
&\text{Pr}(k_{\ell+1} = j | k_{\ell} = i) = \cP^{(\text{mod}(\ell, m)+1)}_{i,j}.
\end{aligned}
\end{equation}
Here $\text{mod}(\ell, m)$ denotes the remainder after dividing $\ell$ by $m$. For notation simplicity, we use $\cP^{(\ell)}_{i,j}$ to denote $\cP^{(\text{mod}(\ell-1, m)+1)}_{i,j}$ for $\ell=1,2,\cdots$.

\subsection{The Multi-way Monte Carlo Method}
Our goal is to compute the functional $\langle \vh, \vx \rangle$ where $\vx$ is the solution of linear system $\vx = \mH\vx + \vb$. Through the paper we have the basic assumption $\rho(\mH)<1$. We also exclude the corner cases where $\vh$ is a zero vector,
or $\mH$ has zero rows/columns.
If we construct the initial probability such that $h_i\neq 0 \Rightarrow p_i\neq 0$ and the transition hypermatrix $\cmP$ such that $H_{i,j}\neq 0 \Rightarrow \cP^{(\ell)}_{i,j}\neq 0$,
then we can define the related weights $W_{\ell}$ and the variable $Z$ in a similar way with section~\ref{sec_intro}, and formally:
\begin{equation}
\label{equ_weight}
\begin{aligned}
&W_{\ell} = \frac{h_{k_0}H_{k_0,k_1}H_{k_1,k_2}\cdots H_{k_{\ell-1},k_{\ell}}}{p_{k_0}\cP^{(1)}_{k_0,k_1}\cP^{(2)}_{k_1,k_2}\cdots \cP^{(\ell)}_{k_{\ell-1},k_{\ell}}}
\quad \text{for }\ell=0,1,2,\cdots \\
&Z = \sum_{\ell = 0}^{\infty} W_{\ell}b_{k_{\ell}}
\end{aligned}
\end{equation}
It is worth noting the above definition of multi-way Markov random walk is a
generalization of the standard Markov chain, which is the special case with $m=1$.

\begin{theorem}
\label{the_exp}
For the linear system $\vx = \mH\vx + \vb$, $Z$ defined from~\eqref{equ_weight} has the expected value $\E[Z] = \langle \vh,\vx \rangle$.
\end{theorem}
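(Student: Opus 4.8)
The plan is to verify the identity term by term in the Neumann expansion and then sum. Since $\rho(\mH)<1$, the Neumann series converges and $\vx = \sum_{\ell=0}^{\infty}\mH^{\ell}\vb$, so that $\langle \vh,\vx\rangle = \sum_{\ell=0}^{\infty}\langle \vh,\mH^{\ell}\vb\rangle$. It therefore suffices to show that $\E[W_{\ell}b_{k_{\ell}}]=\langle \vh,\mH^{\ell}\vb\rangle$ for each fixed $\ell$, and then to justify exchanging the expectation with the infinite sum defining $Z$.

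First I would compute the single-term expectation directly from the law of the walk. A realization visiting states $k_0=i_0, k_1=i_1,\ldots,k_{\ell}=i_{\ell}$ occurs with probability $p_{i_0}\cP^{(1)}_{i_0,i_1}\cP^{(2)}_{i_1,i_2}\cdots\cP^{(\ell)}_{i_{\ell-1},i_{\ell}}$, which is exactly the denominator appearing in $W_{\ell}$. Writing $\E[W_{\ell}b_{k_{\ell}}]$ as a sum over all index tuples $(i_0,\ldots,i_{\ell})$, the path probability cancels against the denominator of $W_{\ell}$, leaving $\sum_{i_0,\ldots,i_{\ell}} h_{i_0}H_{i_0,i_1}\cdots H_{i_{\ell-1},i_{\ell}}b_{i_{\ell}}$. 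Here the hypotheses $h_i\neq 0\Rightarrow p_i\neq 0$ and $H_{i,j}\neq 0\Rightarrow\cP^{(\ell)}_{i,j}\neq 0$ are precisely what prevent division by zero: whenever a denominator factor vanishes the matching numerator factor vanishes as well, so that tuple contributes nothing and may be dropped. Summing out $i_{\ell}, i_{\ell-1},\ldots,i_1$ in turn collapses the chain of $H$ factors into the matrix power, giving $\sum_{i_0}h_{i_0}(\mH^{\ell}\vb)_{i_0}=\langle \vh,\mH^{\ell}\vb\rangle$, as desired. Note that the round-robin structure of $\cmP$ never interferes with this telescoping, since the numerator depends only on $\mH$; this is why the $m=1$ case recovers the classical identity as a sanity check.

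It then remains to assemble $\E[Z]=\E\bigl[\sum_{\ell}W_{\ell}b_{k_{\ell}}\bigr]=\sum_{\ell}\E[W_{\ell}b_{k_{\ell}}]$. The interchange of expectation and the infinite sum is the one genuinely delicate point, and I expect it to be the main obstacle rather than the algebra. I would justify it by dominated convergence (the Fubini--Tonelli theorem for series): rerunning the same cancellation with absolute values replaces $\mH$, $\vh$, $\vb$ by their entrywise magnitudes, yielding $\sum_{\ell}\E|W_{\ell}b_{k_{\ell}}|\le\sum_{\ell}\langle |\vh|,(\mH^{+})^{\ell}|\vb|\rangle$, where $H^{+}_{ij}=|H_{ij}|$. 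When this dominating series converges --- which holds exactly under the paper's sharper hypothesis $\rho(\mH^{+})<1$ --- the interchange is legitimate, and the partial sums telescope to $\langle \vh,\sum_{\ell}\mH^{\ell}\vb\rangle=\langle \vh,\vx\rangle$, completing the argument.
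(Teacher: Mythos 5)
Your core computation is identical to the paper's: expand $\E[W_{\ell}b_{k_{\ell}}]$ as a sum over paths, cancel the path probability against the denominator of $W_{\ell}$, use the nonzero-pattern hypotheses ($h_i\neq 0 \Rightarrow p_i\neq 0$, $H_{i,j}\neq 0 \Rightarrow \cP^{(\ell)}_{i,j}\neq 0$) to pass from admissible paths to all index tuples, and telescope to $\langle \vh,\mH^{\ell}\vb\rangle$, then sum via the Neumann series. Where you genuinely differ is the final step: the paper simply writes $\E[Z]=\sum_{\ell=0}^{\infty}\E[W_{\ell}b_{k_{\ell}}]$ with no justification, whereas you single out the interchange of expectation and infinite summation as the delicate point and justify it by Tonelli/domination, using the fact that the same cancellation with absolute values gives $\E\lvert W_{\ell}b_{k_{\ell}}\rvert = \langle \lvert\vh\rvert,(\mH^{+})^{\ell}\lvert\vb\rvert\rangle$ (your $\le$ is actually an equality), which is summable when $\rho(\mH^{+})<1$. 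This is more careful than the paper's own proof of Theorem~\ref{the_exp}: under the paper's blanket assumption $\rho(\mH)<1$ alone, the series defining $Z$ need not converge absolutely in expectation, so $Z$ may fail to be integrable and the interchange is not automatic --- the paper only confronts this kind of issue later, invoking Tonelli in Theorem~\ref{the_var} and Fubini in the remark after it. The one caveat is that $\rho(\mH^{+})<1$ is not formally a hypothesis of Theorem~\ref{the_exp}, so strictly speaking your argument proves the statement under an added assumption; but that assumption (or $\normof{\tilde{\mH}}<1$, which forces it) is exactly the regime in which the rest of the paper operates, so your version is arguably the correct rigorous formulation rather than a defect.
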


\begin{proof}
We first prove that $\E[W_{\ell}b_{k_\ell}] = \langle \vh, \mH^{\ell}\vb \rangle$ for all $\ell = 0,1,2,\cdots$. Then the convergence of the Neumann series will give us $\E[Z] = \langle \vh,\vx \rangle$.

We have $\E[W_0b_{k_0}] = \sum_{p_{k_0}\neq 0} \frac{h_{k_0}}{p_{k_0}} b_{k_0} p_{k_0} = \sum_{h_{k_0}\neq 0} h_{k_0} b_{k_0} = \langle \vh,\vb \rangle$. Similarly for the case of $\ell \geq 1$:
\begin{equation*}
\begin{aligned}
\E[W_{\ell}b_{k_\ell}] &= \sum_{p_{k_0}\neq 0}\sum_{\cP^{(1)}_{k_0,k_1}\neq 0}\cdots\sum_{\cP^{(\ell)}_{k_{\ell-1},k_{\ell}}\neq 0}
\frac{h_{k_0}H_{k_0,k_1}H_{k_1,k_2}\cdots H_{k_{\ell-1},k_{\ell}}}{p_{k_0}\cP^{(1)}_{k_0,k_1}\cP^{(2)}_{k_1,k_2}\cdots \cP^{(\ell)}_{k_{\ell-1},k_{\ell}}}
b_{k_{\ell}}
p_{k_0}\cP^{(1)}_{k_0,k_1}\cP^{(2)}_{k_1,k_2}\cdots \cP^{(\ell)}_{k_{\ell-1},k_{\ell}}\\
&= \sum_{h_{k_0}\neq 0}\sum_{H_{k_0,k_1}\neq 0}\cdots\sum_{H_{k_{\ell-1},k_{\ell}}\neq 0}
h_{k_0}H_{k_0,k_1}H_{k_1,k_2}\cdots H_{k_{\ell-1},k_{\ell}}
b_{k_{\ell}}\\
& = \sum_{k_0=1}^{n} \sum_{k_1=1}^{n} \cdots\sum_{k_{\ell}=1}^{n}
h_{k_0}H_{k_0,k_1}H_{k_1,k_2}\cdots H_{k_{\ell-1},k_{\ell}}
b_{k_{\ell}}
= \langle \vh, \mH^{\ell}\vb \rangle
\end{aligned}
\end{equation*}
So $\E[Z] = \sum_{\ell=0}^{\infty} \E[W_{\ell}b_{k_{\ell}}] = \langle \vh, \sum_{\ell=0}^{\infty} \mH^{\ell}\vb \rangle = \langle \vh,\vx \rangle$.
\end{proof} 


\subsection{Convergence Analysis}
In order to statistically estimate $\E[Z]$, we need to ensure $\Var[Z] < \infty$. The following theorem reveals the explicit form of $\Var[Z]$ determined by $\vh,\vb,\mH$ and the $m-$way random walk $(\vp,\cmP)$.

\begin{theorem}
\label{the_var}
 For the linear system $\vx = \mH\vx + \vb$, if $\mH$ and $\vb$ are nonnegative, $Z$ defined from~\eqref{equ_weight} has variance 
\begin{equation}
\label{equ_variance}
\Var[Z] = \langle \hat{\vh},\sum_{i=0}^{\infty}\tilde{\mH}^{i}\mG \Diag(\vb) (2\mH\vx + \vb) \rangle
-\langle \vh,\vx \rangle ^2
\end{equation}
where $\Diag(\vb)$ is a diagonal matrix with diagonal entries equal to $\vb$, and $\hat{\vh}, \tilde{\mH}, \mG$ are defined as:
\begin{equation*}
\begin{aligned}
&\hat{h}_i = \begin{cases}
h_i^2/p_i & \text{if } h_i\neq 0\\
0 & \text{if } h_i = 0
\end{cases}
\qquad
\hat{H}^{(\ell)}_{i,j} = \begin{cases}
H^{2}_{i,j}/\cP^{(\ell)}_{i,j} & \text{if } H_{i,j}\neq 0\\
0 & \text{if } H_{i,j} = 0
\end{cases}\\
&\tilde{\mH} = \hat{\mH}^{(1)}\hat{\mH}^{(2)}\cdots\hat{\mH}^{(m)}
\qquad
\mG = \mI + \hat{\mH}^{(1)} + \hat{\mH}^{(1)}\hat{\mH}^{(2)} + \cdots + \hat{\mH}^{(1)}\hat{\mH}^{(2)}\cdots\hat{\mH}^{(m-1)}
\end{aligned}
\end{equation*}
\end{theorem}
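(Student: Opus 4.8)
The plan is to compute $\E[Z^2]$ and subtract $\langle\vh,\vx\rangle^2$, since Theorem~\ref{the_exp} already gives $\E[Z]=\langle\vh,\vx\rangle$ and so the second term of~\eqref{equ_variance} is exactly $(\E[Z])^2$. To get $\E[Z^2]$ I would first peel off the initial state. Every weight in~\eqref{equ_weight} carries the common factor $h_{k_0}/p_{k_0}$, so I write $W_\ell=(h_{k_0}/p_{k_0})V_\ell$, where $V_\ell$ is the product of the $H/\cP$ ratios from $k_0$ onward (with $V_0=1$), and set $Y=\sum_{\ell=0}^{\infty}V_\ell b_{k_\ell}$, so that $Z=(h_{k_0}/p_{k_0})Y$. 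Conditioning on $k_0=i$ and averaging over the initial distribution yields $\E[Z^2]=\sum_i p_i (h_i/p_i)^2\,\E[Y^2\mid k_0=i]=\sum_i \hat h_i\,\E[Y^2\mid k_0=i]$, which is where the vector $\hat{\vh}$ enters.

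The core of the argument is a one-step recursion for the conditional second moments of the tail. Because the transition matrix at step $\ell$ depends on $\ell\bmod m$, I would track the phase: let $Y^{(r)}$ be the tail variable whose first transition uses $\cP^{(r+1)}$, and define $g_i^{(r)}=\E[(Y^{(r)})^2\mid\text{start at }i]$. Decomposing $Y^{(r)}=b_i+(H_{i,k_1}/\cP^{(r+1)}_{i,k_1})\,Y^{(r+1)}$ on the first step and squaring, the cross term requires the first moment of the tail; one checks $\E[Y^{(r+1)}\mid\text{start }j]=x_j$ directly from the identity $\vx=\mH\vx+\vb$ (equivalently, Theorem~\ref{the_exp} for the functional that picks out coordinate $j$). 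Taking expectations over $k_1$ and using $H_{ij}^2/\cP^{(r+1)}_{ij}=\hat H^{(r+1)}_{ij}$ collapses the quadratic term into $\hat{\mH}^{(r+1)}$, giving the vector recursion
\begin{equation*}
\vg^{(r)} = \Diag(\vb)\,(2\mH\vx+\vb) + \hat{\mH}^{(r+1)}\vg^{(r+1)},
\end{equation*}
where the first term is $b_i\bigl(b_i+2(\mH\vx)_i\bigr)$ written in vector form.

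Writing $\vc=\Diag(\vb)(2\mH\vx+\vb)$ and using the periodicity $\vg^{(m)}=\vg^{(0)}$, I would unroll the recursion through one full period. Substituting $\vg^{(1)},\dots,\vg^{(m-1)}$ in turn produces $\vg^{(0)}=\mG\vc+\tilde{\mH}\vg^{(0)}$: the partial products $\mI,\hat{\mH}^{(1)},\dots,\hat{\mH}^{(1)}\cdots\hat{\mH}^{(m-1)}$ accumulate into $\mG$ and the full product becomes $\tilde{\mH}$. Solving gives $\vg^{(0)}=(\mI-\tilde{\mH})^{-1}\mG\vc=\sum_{i=0}^{\infty}\tilde{\mH}^i\mG\vc$, and since the walk starts in phase $r=0$ we have $\E[Y^2\mid k_0=i]=g_i^{(0)}$, so $\E[Z^2]=\langle\hat{\vh},\vg^{(0)}\rangle$, which is precisely the first term of~\eqref{equ_variance}.

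The main obstacle is rigor in the infinite-series manipulations rather than the algebra: justifying that the one-step conditioning may be applied termwise to the infinite tail, that the Neumann expansion of $(\mI-\tilde{\mH})^{-1}$ converges, and that every interchange of summation and expectation is valid. This is exactly where the hypothesis that $\mH$ and $\vb$ are nonnegative does the work — it makes $Y^{(r)}\ge 0$ and every summand nonnegative, so Tonelli and monotone convergence license the rearrangements without absolute-convergence bookkeeping, and $\vg^{(0)}$ emerges as the minimal nonnegative solution of the fixed-point equation $\vg^{(0)}=\mG\vc+\tilde{\mH}\vg^{(0)}$. Convergence of $\sum_i\tilde{\mH}^i$ additionally requires $\rho(\tilde{\mH})<1$; I would remark that this is the spectral condition (tied to $\rho(\mH^{+})<1$) under which the variance is finite, consistent with the paper's main claim.
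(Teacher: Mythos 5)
Your proposal is correct in substance, and it takes a genuinely different route from the paper's proof. The paper computes $\E[Z^2]$ by direct series expansion: it splits $Z^2$ into the diagonal part $\sum_{\ell}W_\ell^2 b_{k_\ell}^2$ and the cross part $2\sum_{r>\ell}W_\ell W_r b_{k_\ell}b_{k_r}$, evaluates each expectation in closed form ($\E[W_\ell^2b_{k_\ell}^2]=\langle\hat{\vh},\hat{\mH}^{(1)}\cdots\hat{\mH}^{(\ell)}\Diag(\vb)\vb\rangle$, while the cross terms collapse through the Neumann series into $\Diag(\vb)\mH\vx$), and then uses the periodicity of the hypermatrix to re-sum everything into $\sum_{i\ge 0}\tilde{\mH}^i\mG(\cdots)$; Tonelli's theorem licenses all rearrangements because every term is nonnegative. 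You instead do first-step analysis: peel off $h_{k_0}/p_{k_0}$, condition on the first transition, and derive the phase-indexed recursion $g^{(r)}=\Diag(\vb)(2\mH\vx+\vb)+\hat{\mH}^{(r+1)}g^{(r+1)}$ for the conditional second moments, then unroll one full period to obtain a fixed-point equation in $g^{(0)}$ alone. Your route is arguably more transparent --- the source term $\Diag(\vb)(2\mH\vx+\vb)$ appears immediately from squaring a single step rather than from reassembling a double series, and it isolates exactly where the tail-expectation identity $\E[\text{tail starting at }j]=x_j$ (Theorem~\ref{the_exp}) is used --- whereas the paper's expansion never needs to identify the second moment as the solution of an equation, since the series is produced term by term.

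That last point is where your write-up has a gap to close. The theorem does not assume $\rho(\tilde{\mH})<1$: identity~\eqref{equ_variance} is asserted --- and later exploited in Corollary~\ref{cor_2} --- precisely in the regime where both sides may equal $+\infty$, so inverting $(\mI-\tilde{\mH})$ is not available, and a nonnegative solution of the fixed-point equation need not coincide with the Neumann series unless it is the \emph{minimal} nonnegative solution. You assert that $g^{(0)}$ is minimal, but that is exactly what must be proved. The repair is the truncation argument you allude to: with $Y_N$ the tail truncated after $N$ steps, monotone convergence gives $\E[Y_N^2\mid\cdot\,]\uparrow g$, and passing to the limit in the one-step identity shows the recursion holds exactly in $[0,\infty]$; unrolling it $k$ periods and discarding the nonnegative remainder $\tilde{\mH}^k g^{(0)}$ gives $g^{(0)}\geq\sum_{i=0}^{k-1}\tilde{\mH}^i\mG\,\Diag(\vb)(2\mH\vx+\vb)$ for every $k$, while induction on $N$ (using that the truncated first moments are dominated by $\mH\vx$) gives $\E[Y_N^2\mid\cdot\,]\leq\sum_{i\ge0}\tilde{\mH}^i\mG\,\Diag(\vb)(2\mH\vx+\vb)$. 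Together these yield equality in $[0,\infty]$ with no spectral hypothesis, after which your argument establishes the theorem in the same generality as the paper's proof.
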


\begin{proof}
Since $\Var[Z] = \E[Z^2] - (\E[Z])^2 = \E[Z^2] - \langle \vh,\vx \rangle ^2$, we will focus on computing $\E[Z^2]$:
\begin{equation*}
\E[Z^2] = \E[\sum_{\ell=0}^{\infty}W_{\ell}^{2}b_{k_\ell}^{2} + 2\sum_{r>\ell} W_{\ell}W_{r}b_{k_{\ell}}b_{k_r}]
\end{equation*}
Since all the intermediate terms are nonnegative, by Tonelli's theorem we can analyze the sum in pieces.
We have $\E[W_{0}^2b_{k_0}^2] = \sum_{p_{k_0}\neq 0} \frac{h^{2}_{k_0}}{p^{2}_{k_0}} b^{2}_{k_0} p_{k_0} = \sum_{\hat{h}_{k_0}\neq 0} \hat{h}_{k_0} b^{2}_{k_0} = \langle \hat{\vh},\Diag(\vb)\vb \rangle$, and when $\ell \geq 1$,
\begin{equation}
\label{equ_wb}
\begin{aligned}
\E[W_{\ell}^2b_{k_{\ell}}^2] &= \hspace{-0.1cm} \sum_{p_{k_0}\neq 0}\sum_{\cP^{(1)}_{k_0,k_1}\neq 0}\cdots\sum_{\mathclap{\cP^{(\ell)}_{k_{\ell-1},k_{\ell}}\neq 0}}\,\,
\Big( \frac{h_{k_0}H_{k_0,k_1}H_{k_1,k_2}\cdots H_{k_{\ell-1},k_{\ell}}}{p_{k_0}\cP^{(1)}_{k_0,k_1}\cP^{(2)}_{k_1,k_2}\cdots \cP^{(\ell)}_{k_{\ell-1},k_{\ell}}} \Big)^2 
b^{2}_{k_{\ell}}
p_{k_0}\cP^{(1)}_{k_0,k_1}\cP^{(2)}_{k_1,k_2}\cdots \cP^{(\ell)}_{k_{\ell-1},k_{\ell}}\\
& = \hspace{-0.1cm} \sum_{\hat{h}_{k_0}\neq 0}\sum_{\hat{H}^{(1)}_{k_0,k_1}\neq 0}\cdots\sum_{\mathclap{\hat{H}^{(\ell)}_{k_{\ell-1},k_{\ell}}\neq 0}}\,
\hat{h}_{k_0}\hat{H}^{(1)}_{k_0,k_1}\hat{H}^{(2)}_{k_1,k_2}\cdots \hat{H}^{(\ell)}_{k_{\ell-1},k_{\ell}} b^{2}_{k_{\ell}}\\
& = \langle \hat{\vh}, \hat{\mH}^{(1)}\hat{\mH}^{(2)}\cdots\hat{\mH}^{(\ell)}\Diag(\vb)\vb \rangle
\end{aligned}
\end{equation}
Applying the above result from~\eqref{equ_wb}, we have
\begin{equation}
\label{equ_sumwb}
\begin{aligned}
E&[\sum_{\ell=0}^{\infty}W_{\ell}^{2}b_{k_\ell}^{2}]
=
\Big \langle \hat{\vh}, \big(\mI + \sum_{\ell=1}^{\infty}\hat{\mH}^{(1)}\hat{\mH}^{(2)}\cdots\hat{\mH}^{(\ell)}\big)\Diag(\vb)\vb \Big \rangle \\
& = 
\Big \langle \hat{\vh}, \big(\mG + \sum_{\ell=m}^{\infty}\hat{\mH}^{(1)}\hat{\mH}^{(2)}\cdots\hat{\mH}^{(\ell)}\big)\Diag(\vb)\vb \Big \rangle\\
& = 
\Big \langle \hat{\vh}, \big(\mG + \tilde{\mH}(\mI + \sum_{\ell=1}^{\infty}\hat{\mH}^{(1)}\hat{\mH}^{(2)}\cdots\hat{\mH}^{(\ell)})\big)\Diag(\vb)\vb \Big \rangle
= 
\Big \langle \hat{\vh}, \sum_{i=0}^{\infty}\tilde{\mH}^{i}\mG \Diag(\vb)\vb \Big \rangle
\end{aligned}
\end{equation}

Next we compute the second part of $\E[Z^2]$:
\begin{eqnarray}
\label{equ_wwbb}
\nonumber
& &\E[\sum_{r>\ell} W_{\ell}W_{r}b_{k_{\ell}}b_{k_r}] = \E[\sum_{\ell=0}^{\infty} W_{\ell} b_{k_{\ell}}( \sum_{r=\ell+1}^{\infty} W_{r}b_{k_r})]\\ \nonumber
& & = \hspace{-0.05cm} \sum_{\ell=0}^{\infty} 
\sum_{p_{k_0}\hspace{-0.05cm}\neq 0} \cdots\sum_{\mathclap{\cP^{(\ell)}_{k_{\ell-1},k_{\ell}}\hspace{-0.05cm} \neq 0}}\,
\Big(\frac{h_{k_0}H_{k_0,k_1}H_{k_1,k_2} \hspace{-0.1cm} \cdots H_{k_{\ell-1},k_{\ell}}}{p_{k_0}\cP^{(1)}_{k_0,k_1}\cP^{(2)}_{k_1,k_2} \hspace{-0.1cm} \cdots \cP^{(\ell)}_{k_{\ell-1},k_{\ell}}}\Big)^2
\hspace{-0.05cm} \Big(\hspace{-0.15cm} \sum_{r=\ell+1}^{\infty} \sum_{\cP^{(\ell)}_{k_{\ell},k_{\ell+1}} \hspace{-0.05cm} \neq 0} \cdots\sum_{\mathclap{\cP^{(\ell)}_{k_{r-1},k_{r}}\hspace{-0.05cm} \neq 0}}\hspace{0.35cm}
\frac{H_{k_{\ell},k_{\ell+1}} \hspace{-0.1cm} \cdots H_{k_{r-1},k_{r}}}{\cP^{(\ell+1)}_{k_{\ell},k_{\ell+1}} \hspace{-0.1cm} \cdots \cP^{(r)}_{k_{r-1},k_{r}}} \\ \nonumber
& &\ \quad \times 
p_{k_0}\cP^{(1)}_{k_0,k_1}\cP^{(2)}_{k_1,k_2} \cdots \cP^{(\ell)}_{k_{\ell-1},k_{\ell}}
\cP^{(\ell+1)}_{k_{\ell},k_{\ell+1}} \hspace{-0.1cm} \cdots \cP^{(r)}_{k_{r-1},k_{r}}
b_{k_\ell}b_{k_r} \Big),
\end{eqnarray}
(here, we have extracted all the prefix terms in $W_\ell$ and $W_r$ that are the same because $r > \ell$)
\begin{eqnarray}
\nonumber & & = \hspace{-0.05cm} \sum_{\ell=0}^{\infty} 
\sum_{\hat{h}_{k_0}\hspace{-0.05cm}\neq 0} \cdots\sum_{\mathclap{\hat{H}^{(\ell)}_{k_{\ell-1},k_{\ell}}\hspace{-0.05cm} \neq 0}}
\Big(\hat{h}_{k_0}\hat{H}^{(1)}_{k_0,k_1}\hat{H}^{(2)}_{k_1,k_2} \hspace{-0.1cm} \cdots \hat{H}^{(\ell)}_{k_{\ell-1},k_{\ell}} \Big)
b_{k_\ell}
\Big(\hspace{-0.15cm} \sum_{r=\ell+1}^{\infty} \sum_{H_{k_{\ell},k_{\ell+1}} \hspace{-0.05cm} \neq 0} \cdots\sum_{\mathclap{H_{k_{r-1},k_{r}} \hspace{-0.05cm} \neq 0}}
H_{k_{\ell},k_{\ell+1}} \hspace{-0.1cm} \cdots H_{k_{r-1},k_{r}} \Big)b_{k_r}\\ 
& &= \Big \langle \hat{\vh},
\sum_{\ell=0}^{\infty} (\hat{\mH}^{(1)}\cdots\hat{\mH}^{(\ell)})
\Diag(\vb) (\sum_{r=\ell+1}^{\infty} \mH^{r-\ell} \vb) \Big \rangle\\ \nonumber
& &= \Big \langle \hat{\vh},
\sum_{\ell=0}^{\infty} (\hat{\mH}^{(1)}\cdots\hat{\mH}^{(\ell)})
\Diag(\vb) \mH\vx \Big\rangle
= \Big \langle \hat{\vh}, \sum_{i=0}^{\infty}\tilde{\mH}^{i}\mG \Diag(\vb)\mH\vx \Big \rangle.
\end{eqnarray}
For these final steps, we used the Neumann series to move to $\mH\vx$ and then used the periodicity to rewrite the expressions in terms of $\mG$. 
Now, combining the results from~\eqref{equ_sumwb} and~\eqref{equ_wwbb} we have:
\begin{equation*}
\begin{aligned}
\Var[Z]
& = \Big \langle \hat{\vh}, \sum_{i=0}^{\infty}\tilde{\mH}^{i}\mG \Diag(\vb)\vb \Big \rangle
+ 2 \Big \langle \hat{\vh}, \sum_{i=0}^{\infty}\tilde{\mH}^{i}\mG \Diag(\vb)\mH\vx \Big \rangle
-\langle \vh,\vx \rangle ^2\\
& =
\langle \hat{\vh},\sum_{i=0}^{\infty}\tilde{\mH}^{i}\mG \Diag(\vb) (2\mH\vx + \vb) \rangle
-\langle \vh,\vx \rangle ^2 \qedhere
\end{aligned}
\end{equation*}
\end{proof}

For the general cases of $\mH,\vb$ without the assumption of nonnegativity, if $\rho(\tilde{\mH}) < 1$, the above conclusion (i.e., equation \eqref{equ_variance}) still holds according to Fubini's Theorem.

Combining both of these results, the following corollary is straightforward from the conclusion of Theorem~\ref{the_var}.

\begin{corollary}
\label{cor_1}
For the linear system $\vx = \mH\vx + \vb$, if the spectral radius $\rho(\tilde{\mH}) < 1$, then $\Var[Z] = \langle \hat{\vh},(\mI - \tilde{\mH})^{-1}\mG \Diag(\vb) (2\mH\vx + \vb) \rangle
-\langle \vh,\vx \rangle ^2 < \infty$
\end{corollary}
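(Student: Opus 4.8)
The plan is to read the closed form directly off the infinite-series expression for the variance already established in Theorem~\ref{the_var}. By the remark immediately preceding the corollary, equation~\eqref{equ_variance} holds in the required generality: for nonnegative $\mH$ and $\vb$ it holds unconditionally, and for arbitrary $\mH,\vb$ it holds once $\rho(\tilde{\mH}) < 1$ via Fubini's theorem. Hence under the hypothesis $\rho(\tilde{\mH}) < 1$ I may take
\[
\Var[Z] = \Big\langle \hat{\vh}, \sum_{i=0}^{\infty}\tilde{\mH}^{i}\, \mG \Diag(\vb)(2\mH\vx + \vb) \Big\rangle - \langle \vh, \vx\rangle^2
\]
as my starting point, so that the only remaining work is to evaluate the matrix Neumann series and to argue finiteness.

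First I would invoke the standard spectral fact that $\rho(\tilde{\mH}) < 1$ forces $\mI - \tilde{\mH}$ to be invertible ($1$ is not an eigenvalue of $\tilde{\mH}$) and makes the Neumann series $\sum_{i=0}^{\infty}\tilde{\mH}^{i}$ converge, in any submultiplicative matrix norm, to $(\mI - \tilde{\mH})^{-1}$. This is exactly the Neumann-series mechanism already used in Theorem~\ref{the_exp}, now applied to $\tilde{\mH}$ in place of $\mH$.

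Next I would push this limit through the inner product. Writing $\vec{c} = \mG \Diag(\vb)(2\mH\vx + \vb)$ for the fixed right-hand vector (which is well defined because the standing assumption $\rho(\mH) < 1$ makes $\vx$, and hence $\mH\vx$, finite), the map $\mat{M} \mapsto \langle \hat{\vh}, \mat{M}\,\vec{c}\rangle$ is linear and therefore continuous on the finite-dimensional space of $n\times n$ matrices. Norm convergence of the partial sums $\sum_{i=0}^{N}\tilde{\mH}^{i} \to (\mI - \tilde{\mH})^{-1}$ then permits the exchange of limit and inner product, giving
\[
\Big\langle \hat{\vh}, \sum_{i=0}^{\infty}\tilde{\mH}^{i}\,\vec{c} \Big\rangle = \big\langle \hat{\vh}, (\mI - \tilde{\mH})^{-1}\vec{c} \big\rangle,
\]
which is precisely the claimed closed form.

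Finally, finiteness is immediate: $(\mI - \tilde{\mH})^{-1}$ is a genuine finite-entried matrix, while $\hat{\vh}$ and $\vec{c}$ are fixed finite vectors, so their inner product is a finite real number; subtracting the finite scalar $\langle \vh,\vx\rangle^2$ keeps the total finite, and thus $\Var[Z] < \infty$. I do not anticipate any real obstacle, since the corollary is essentially a continuity-plus-Neumann-series rewriting of Theorem~\ref{the_var}. The one point deserving a word of care is the interchange of the infinite sum with the inner product, which the norm convergence of the Neumann series justifies; everything else is bookkeeping.
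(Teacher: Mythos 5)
Your proposal is correct and follows essentially the same route as the paper: the paper derives the corollary directly from Theorem~\ref{the_var} (extended to general $\mH,\vb$ via the Fubini remark) by summing the Neumann series $\sum_{i=0}^{\infty}\tilde{\mH}^{i} = (\mI - \tilde{\mH})^{-1}$ under $\rho(\tilde{\mH})<1$. Your added details---norm convergence of the partial sums and continuity of the map $\mat{M} \mapsto \langle \hat{\vh}, \mat{M}\vec{c}\rangle$ to justify exchanging the limit with the inner product---are exactly the bookkeeping the paper leaves implicit when it calls the corollary ``straightforward.''
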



The above analysis of $\Var[Z]$ shows that with the condition $\rho(\tilde{\mH}) < 1$, and by the law of large numbers we can estimate the value of $\langle \vh,\vx \rangle$ from the variable $Z$.
For the cases when $\rho(\tilde{\mH}) \geq 1$, the following corollary shows that it is possible to have $\Var[Z] = \infty$. The essence of the idea and proof is just that we can construct a vector to touch the dominant eigenvector with eigenvalue $\ge 1$.

\begin{corollary}
\label{cor_2}
Under the same assumptions with Theorem~\ref{the_var}, if the spectral radius $\rho(\tilde{\mH}) \geq 1$, and if $\mG$ is full-rank, then there always exists some $\vb,\vh\in \mathbb{R}^n$ such that $\Var[Z] = \infty$. (Note that for the standard Monte Carlo method (i.e., $m=1$), since
$\mG = \mI$, the method diverges for certain $\vb,\vh$.)
\end{corollary}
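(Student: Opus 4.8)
The plan is to reduce the whole statement to the divergence of a single scalar geometric series governed by the Perron root of $\tilde{\mH}$. First I would record that under the nonnegativity hypotheses every factor appearing in the variance formula of Theorem~\ref{the_var} is entrywise nonnegative: $\hat{\vh}\ge 0$, $\tilde{\mH}\ge 0$, $\mG\ge 0$, $\Diag(\vb)\ge 0$, $\mH\ge 0$, and $\vx=(\mI-\mH)^{-1}\vb\ge 0$ since $\rho(\mH)<1$. Hence every summand of $\langle \hat{\vh},\sum_{i=0}^{\infty}\tilde{\mH}^{i}\mG\Diag(\vb)(2\mH\vx+\vb)\rangle$ is nonnegative, and $\langle\vh,\vx\rangle^2$ is finite. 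So it suffices to produce $\vb,\vh$ for which this series equals $+\infty$; then $\Var[Z]=+\infty$.

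Second, because $\tilde{\mH}\ge 0$, the Perron--Frobenius theorem for general (possibly reducible) nonnegative matrices gives that $\lambda:=\rho(\tilde{\mH})\ge 1$ is an eigenvalue of $\tilde{\mH}^T$ with a nonnegative left eigenvector $\vu\ge 0$, $\vu\ne 0$, i.e. $\tilde{\mH}^T\vu=\lambda\vu$. I would then choose $\vh$ together with a compatible initial distribution $\vp$ so that $\hat{\vh}$ lies exactly in the direction of $\vu$: writing $S=\{i:u_i>0\}$, take any probability vector $\vp$ with $p_i>0$ on $S$ and set $h_i=\sqrt{p_i u_i}$ for $i\in S$ and $h_i=0$ otherwise. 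This is admissible ($h_i\ne 0\Rightarrow p_i\ne 0$) and yields $\hat{h}_i=h_i^2/p_i=u_i$, i.e. $\hat{\vh}=\vu$. With this choice $\langle \hat{\vh},\tilde{\mH}^i\vw\rangle=\langle(\tilde{\mH}^T)^i\vu,\vw\rangle=\lambda^i\langle\vu,\vw\rangle$ for all $i$, where $\vw:=\mG\Diag(\vb)(2\mH\vx+\vb)$.

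Third, it remains only to pick $\vb$ so that $\langle\vu,\vw\rangle>0$, whereupon $\sum_{i}\lambda^i\langle\vu,\vw\rangle=\infty$ because $\lambda\ge 1$, and the proof is complete. This is exactly where the full-rank hypothesis on $\mG$ is used: since $\vu\ne 0$ and $\mG$ is full rank, $\vg:=\mG^T\vu\ne 0$, and because $\mG\ge 0$ we also have $\vg\ge 0$, so some coordinate $g_j>0$. I would then take $\vb$ supported on the single index $j$ with $b_j>0$, giving $\vy:=\Diag(\vb)(2\mH\vx+\vb)$ with $y_j=b_j(2(\mH\vx)_j+b_j)\ge b_j^2>0$ and $y_k=0$ for $k\ne j$. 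Therefore $\langle\vu,\vw\rangle=\langle\vg,\vy\rangle\ge g_j y_j>0$, as needed.

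The calculations are routine once these pieces are in place; the points that deserve care are (i) invoking the \emph{general} nonnegative form of Perron--Frobenius, so that a nonnegative left eigenvector exists at the spectral radius even when $\tilde{\mH}$ is reducible, and (ii) verifying that the target direction $\vu$ is genuinely realizable as $\hat{\vh}$ under the parametrization $\hat{h}_i=h_i^2/p_i$ with $\vp$ a bona fide probability distribution. The essential structural input is the full rank of $\mG$: it is precisely what prevents $\mG^T$ from annihilating $\vu$ and thereby hiding the growing direction. In the standard case $m=1$ one has $\mG=\mI$, which is full rank automatically, so the corollary recovers the known divergence of the classical Monte Carlo estimator.
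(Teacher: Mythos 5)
Your proof is correct, and it takes a genuinely different route from the paper's. The paper argues via the Jordan canonical form of $\tilde{\mH}$: it sorts the Jordan blocks by eigenvalue, tracks the asymptotically dominant entries of $\mJ^j$, chooses $\vh$ with nonzero projection onto the dominant generalized eigenvectors, and then needs a separate algebraic argument --- expanding $y_s$ as a quadratic polynomial in $\vx$ and deriving the contradiction $\mH^T\Diag(\vu)\mH=\Diag(\vu)$ from vanishing coefficients --- to show that a suitable $\vb$ exists. You instead exploit the nonnegativity inherited from Theorem~\ref{the_var}: the general (possibly reducible) Perron--Frobenius theorem gives a nonnegative left eigenvector $\vu$ of $\tilde{\mH}$ at $\lambda=\rho(\tilde{\mH})\ge 1$, realizing $\hat{\vh}=\vu$ collapses the whole variance series to the scalar geometric series $\sum_{i\ge 0}\lambda^i c$ with $c=\langle \vu,\mG\Diag(\vb)(2\mH\vx+\vb)\rangle$, and the full-rank hypothesis enters exactly once, to guarantee $\mG^T\vu\neq 0$ so that a single-coordinate $\vb$ makes $c>0$. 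Your route is shorter and arguably more rigorous on two points the paper glosses over: (i) the variance formula involves $\hat{\vh}$, which is constrained to be entrywise nonnegative, and you explicitly verify that the target direction is realizable as $\hat{\vh}$ under the parametrization $\hat{h}_i=h_i^2/p_i$, whereas the paper silently substitutes $\vh$ for $\hat{\vh}$; (ii) with all terms nonnegative, divergence is immediate, whereas the paper's Jordan-form series would need care about cancellation and complex eigenvalues. What the paper's approach buys in exchange is independence from sign structure: a Jordan-type argument is what one would need in the general signed setting alluded to in the Fubini remark after Theorem~\ref{the_var}, while your argument is tied to the nonnegativity hypotheses of the corollary (which is all the statement requires). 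One small caveat: the corollary quantifies only over $\vb,\vh$, and your construction also selects the initial distribution $\vp$; this is harmless because $\vp$ must in any case be chosen compatibly with $\vh$, and the paper's own proof takes the same liberty, but you should state it explicitly.
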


\begin{proof}
Let $\mJ$ denote the Jordan canonical form for matrix $\hat{\mH}$ s.t. $\hat{\mH} = \mP \mJ \mP^{-1}$, where $\mP = [\vp_1,\vp_2,\cdots,\vp_n]$ and $\vp_i$ for $i=1,2,\cdots,n$ are the generalized eigenvectors. The diagonal entries of $\mJ$ are eigenvalues of $\hat{\mH}$, and $\mJ$ is composed with Jordan blocks:
\begin{equation}
\mJ = \pmat{\mJ_1 & & &\\
 & \mJ_2 & &\\
 & & \ddots &\\
 & & & \mJ_p}
 \text{\quad where \quad }
 \mJ_{i} = \pmat{
\lambda_i & 1 & & \\
& \lambda_i & \ddots & \\
& & \ddots & 1 &\\
& & & \lambda_i
 }
 \text{\quad for }i=1,2,\cdots,p.
\end{equation}

The power of $\mJ$ has the form: $\mJ^j = \Diag(\mJ_1^j, \mJ_2^j, \cdots, \mJ_p^j )$, where each individual block $\mJ_i^j$ with size $s$ is:
\begin{equation}
\mJ_i^j = \pmat{
	\lambda_i^j & \spmat{j\\1}\lambda_i^{j-1} & \cdots & \spmat{j\\s-1}\lambda_i^{j-s-1}\\
	0 & \lambda_i^j & \cdots & \spmat{j\\s-2}\lambda_i^{j-s} \\
	0 & \ddots & \ddots & \\
	0 & \cdots & \cdots & \lambda_i^j

}
\text{\quad for } j>s.
\end{equation}
So the upper right element $\spmat{j\\s-1}\lambda_i^{j-s-1}$ has the largest asymptotic value as $j\rightarrow \infty$.
Without a loss of generality, we can assume that $\mJ_1,\mJ_2,\cdots,\mJ_p$ are sorted in the decending order of eigenvalues, and for the case of the equal eigenvalues, they are sorted in the decending order of block sizes. So let $\mJ_1,\cdots,\mJ_k$ be the blocks with largestest eigenvalues (i.e., $\lambda_1 = \cdots = \lambda_k \geq \lambda_{k+1} \geq \cdots \geq \lambda_p$) and they have the same size $s$.

Denote $\vy = \mP^{-1}\mG \Diag(\vb)(2\mH\vx+\vb)$, and $\vz^{(i)} = \mJ^{i}\vy$ for $i=0,1,2,\cdots,n$. Given $y_s, y_{2s} \cdots, y_{ks} \neq 0$, we have $z^{(i)}_s = \lambda^{i}_{1}y_s (1 + o(1))$,
$z^{(i)}_{2s} = \lambda^{i}_{1}y_{2s} (1 + o(1))$,
$\cdots$
$z^{(i)}_{ks} = \lambda^{i}_{1}y_{ks} (1 + o(1))$,
 and $z^{(i)}_r/z^{(i)}_1 = o(1)$ for $r \neq s,2s,\cdots,ks$ as $i\rightarrow \infty$.
If we select $\vh$ s.t. $\langle \vh,y_s\vp_s+y_{2s}\vp_{2s}+\cdots+y_{ks}\vp_{ks} \rangle \neq 0$,
then we have:
\begin{equation*}
\begin{aligned}
& \langle \vh, \sum_{i=0}^{\infty}\tilde{\mH}^{i} \mG \Diag(\vb)(2\mH\vx+\vb) \rangle
 =\langle \vh,  \sum_{i=0}^{\infty} \mP \vz^{(i)} \rangle\\
 = & \sum_{i=0}^{\infty} z^{(i)}_1 \langle \vh,\vp_1 \rangle + \sum_{i=0}^{\infty} z^{(i)}_2 \langle \vh,\vp_2 \rangle + \cdots + \sum_{i=0}^{\infty} z^{(i)}_n \langle \vh,\vp_n \rangle\\
 =  &\big( \sum_{i=0}^{\infty} \lambda^{i}_1  \langle \vh,y_s\vp_s+y_{2s}\vp_{2s}+\cdots+y_{ks}\vp_{ks} \rangle \big) \big( 1 + o(1)\big) = \infty
\end{aligned}
\end{equation*}
Since $\vp_{s},\vp_{2s},\cdots,\vp_{ks}$ are linear independent, there always exists a vector $\vh$, s.t. $\langle \vh,y_s\vp_s+y_{2s}\vp_{2s}+\cdots+y_{ks}\vp_{ks} \rangle \neq 0$, given $y_{s},y_{2s},\cdots,y_{ks}$ are not all zero. Next we prove that there always exists a vector $\vx$ s.t. $y_s \neq 0$. Let $\vu^T$ denote the $s-$th row of $\mP^{-1}\mG$, then $y_{s}$ can be calculated as:
\begin{equation}
\label{equ_app}
\begin{aligned}
&y_s = \vu^T \Diag(\vb)(2\mH\vx + \vb) \\
 = &\vu^T \Diag(\vx - \mH\vx)(\vx + \mH\vx ) = \vu^T \Diag(\vx)\vx - \vu^T \Diag(\mH\vx)\mH\vx
\end{aligned}
\end{equation}
So $y_{s}$ is a polynomial of $\vx$ with coefficients coming from $\vu^T$ and $\mH$. If $y_{s} = 0$
for all $\vx\in \mathbb{R}^{n}$, then all the coefficients from equation~\ref{equ_app} are zero. Denote $h_i$ for $i=1,2,\cdots,n$ are the columns of matrix $\mH$, then the coefficients of terms $x_1^2,x_1x_2,x_1x_3,\cdots,x_1x_n$ equaling zero gives us:
\begin{equation*}
\begin{cases}
u_1 &- \sum_{i=1}^n u_i H_{i,1}H_{i,1} = 0\\
&- \sum_{i=1}^n u_i H_{i,1}H_{i,2} = 0\\
&- \sum_{i=1}^n u_i H_{i,1}H_{i,3} = 0\\
& \ \ \quad \vdots \qquad \vdots \qquad \qquad \quad \vdots\\
&- \sum_{i=1}^n u_i H_{i,1}H_{i,n} = 0
\end{cases}
\quad \Longrightarrow \quad
\mH^T \Diag(\vu) \vh_1 = (u_1,0,0,\cdots,0)^T
\end{equation*}
Similarly by setting the coefficients of terms $x_ix_1,x_ix_2,\cdots,x_ix_n$ to zero, we have equation: 
\begin{equation*}
\mH^T \Diag(\vu) \vh_i = (0,\cdots,0,u_i,0,\cdots,0)^T.
\end{equation*}
And combining all together will get us $\mH^T \Diag(\vu) \mH = \Diag(\vu)$. Since $\vu^T$ is the first row of a full-rank matrix, it cannot be a vector of all zeros, and the spectral radius
$\rho(\Diag(\vu)) \leq \rho(\mH^T) \rho(\Diag(\vu)) \rho(\mH) < \rho(\Diag(\vu))$ gives us the contradiction. So $y_{s}$ cannot always be zero.
\end{proof}

In this section we have seen that $\E[Z] = \langle \vh,\vx \rangle$, which provides us the potential to estimate the value of $\langle \vh,\vx \rangle$ by simulating the value of $Z$. However whether it is feasible to apply Monte Carlo simulation depends on $\tilde{\mH}$. If $\rho(\tilde{\mH}) < 1$, then $\Var[Z]<\infty$, so the simulation is guaranteed to converge. And if $\rho(\tilde{\mH}) \geq 1$, the simulation tends to fail.

\section{Multi-way Monte Carlo Method}
\label{sec_multiway}
In this section we discuss the two aspects of applying Monte Carlo method
based on the multi-way Markov random walk introduced in Section~\ref{sec_ranwalk}. First, we detail the 
construction of the transition hypermatrix $\cmP$. Second, we give the error analysis regarding the
truncation of the random walk, as well as the probable error.

\subsection{Transition Hypermatrix}
In section~\ref{sec_ranwalk} Corollary~\ref{cor_1} and~\ref{cor_2} indicate that the spectral radius of matrix
$\tilde{\mH}$ is crucial to the variance $\Var[Z]$. The matrix $\tilde{\mH}$ is
determined by the transition hypermatrix $\cmP$. Since it is usually computationally
inefficient to directly compute the spectral radius of a matrix, the common
practice is to find an upper-bound of $\rho(\tilde{\mH})$. The spectral radius of a matrix is bounded by any sub-multiplicative matrix norm. As before, we use the infinity norm $\normof{\cdot} \stackrel{\mathclap{\small\normalfont\mbox{def}}}{=} \normof[\infty]{\cdot}$ in this paper.

We first consider the case for the standard Markov random walk (i.e., $m=1$), where $\tilde{H}_{i,j} = H^{2}_{i,j}/\cP^{(1)}_{i,j}$. The following lemma~\cite{ji2013convergence} provides insight on how to assign the probability in terms of minimizing the norm.
\begin{lemma}
\label{lem_bound}
Let $\vh = (h_1,h_2,\cdots,h_n)^T$ be a vector where at least one of its elements is non-zero:
$h	_k\neq 0$ for some $k\in \{1,2,\cdots,n\}$. Let $\vp = (p_1,p_2,\cdots,p_n)^T$ be a probability
distribution vector. Then $\sum_{i=1}^{n} h^{2}_{i}/p_i \geq \Big(\sum_{i=1}^{n}|h_i|\Big)^2$, and
the lower-bound is attained when $p_i = |h_i|/\sum_{r=1}^{n}|h_r|$.
\end{lemma}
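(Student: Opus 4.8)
The plan is to recognize this as a direct application of the Cauchy--Schwarz inequality, exploiting the normalization constraint $\sum_{i=1}^n p_i = 1$. First I would dispose of the degenerate case: if $p_i = 0$ for some index $i$ with $h_i \neq 0$, then the term $h_i^2/p_i$ is $+\infty$ under the usual convention, so the left-hand side is infinite and the bound holds trivially. Hence I may assume $p_i > 0$ whenever $h_i \neq 0$. Since indices with $h_i = 0$ contribute nothing to either side, I can restrict both sums to the support $\{i : h_i \neq 0\}$ without loss of generality.

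On this support I would split each summand of $\sum_i |h_i|$ as the product $\frac{|h_i|}{\sqrt{p_i}}\cdot \sqrt{p_i}$ and apply Cauchy--Schwarz:
\[
\Big(\sum_i |h_i|\Big)^2 = \Big(\sum_i \tfrac{|h_i|}{\sqrt{p_i}}\,\sqrt{p_i}\Big)^2 \le \Big(\sum_i \tfrac{h_i^2}{p_i}\Big)\Big(\sum_i p_i\Big) \le \sum_i \tfrac{h_i^2}{p_i},
\]
where the final step uses $\sum_i p_i \le 1$ (with equality exactly when $\vp$ is supported on the support of $\vh$). This one line is the entire inequality.

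For the attainment claim I would invoke the equality condition of Cauchy--Schwarz: equality requires the vectors $\big(|h_i|/\sqrt{p_i}\big)_i$ and $\big(\sqrt{p_i}\big)_i$ to be parallel, i.e.\ $|h_i| = c\, p_i$ for a common constant $c$ and all $i$ in the support. Together with $\sum_i p_i = 1$ this forces $c = \sum_r |h_r|$ and hence $p_i = |h_i|/\sum_r |h_r|$. A direct substitution then confirms $\sum_i h_i^2/p_i = \sum_i |h_i|\sum_r|h_r| = \big(\sum_r |h_r|\big)^2$, so the bound is met.

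I do not anticipate a genuine obstacle; the only points needing care are the bookkeeping for the degenerate $p_i = 0$ case and stating the equality condition cleanly so that the optimizer $p_i \propto |h_i|$ falls out unambiguously. If one prefers to avoid the explicit support discussion, an equivalent route reads the left-hand side as the expectation $\mathbb{E}_{\vp}\big[(|h|/p)^2\big]$ and applies Jensen's inequality to the strictly convex map $t \mapsto t^2$, yielding both the same bound and the same equality condition $|h_i|/p_i \equiv \text{const}$.
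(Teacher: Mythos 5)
Your proof is correct. Note that the paper itself does not prove this lemma at all: it is imported from prior work (the citation attached to the lemma), so there is no in-paper argument to compare against. Your Cauchy--Schwarz route is the standard one and is complete: splitting $|h_i| = \frac{|h_i|}{\sqrt{p_i}}\cdot\sqrt{p_i}$ over the support of $\vh$, using $\sum_i p_i \le 1$ on that support, handling the degenerate $p_i=0$ case by the $+\infty$ convention, and verifying attainment by direct substitution of $p_i = |h_i|/\sum_r |h_r|$ (which is all the lemma's ``attained when'' clause requires; your extra necessity argument via the equality condition of Cauchy--Schwarz is a bonus, and it correctly accounts for both inequalities, parallelism and $\sum_i p_i = 1$ on the support). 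The Jensen variant you sketch is equally valid.
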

According to Lemma~\ref{lem_bound}, the infinity norm:
\begin{equation*}
\normof{\tilde{\mH}} = \max_{1\leq i\leq n} \sum_{j=1}^{n}|\tilde{H}_{i,j}| 
=  \max_{1\leq i\leq n} \sum_{j=1}^{n}\frac{H^{2}_{i,j}}{\cP^{(1)}_{i,j}} \geq 
\max_{1\leq i\leq n} \Big(\sum_{j=1}^{n}|H_{i,j}| \Big)^2 = (\normof{\mH})^2.
\end{equation*}
When $\cP^{(1)}_{i,j} = \frac{|H_{i,j}|}{\sum_{k=1}^{n}|H_{i,k}|}$ for all $i,j = 1,2,\cdots,n$, 
the above lower-bound $(\normof{\mH})^2$ is reached, making this choice in some sense optimal. However, for a variety of problems, this choice is unlikely to result in a method that will have $\rho(\tilde{\mH}) < 1$. 
For a linear system $\mA\vx = \vb$,
we can rewrite it into $\vx = \mH\vx + \vb$ as $\mH = \mI - \mA$. It is common to have $\rho(\mH) $ be very
close to $1$ even with the help of preconditioners~\cite{benzi13analysis}. Since the infinity norm is generally
a loose upper-bound for the spectral radius, $\normof{\mH} > 1$ is likely~\cite{benzi13analysis}. This inability of
upper-bounding the spectral radius $\rho(\tilde{\mH})$ for the standard Markov random walk encourages us to explore the multi-way generality.

We describe the method for computing $\cmP$ for an $m-$way Markov random walk in Algorithm~\ref{alg_1}, then we prove in Theorem~\ref{the_mini} that it minimizes $\normof{\tilde{\mH}}$.

\vspace{4pt}

\begin{algorithm}[H]
\label{alg_1}
 \SetAlgoLined
 \KwData{matrix \mH}
 \KwResult{transition hypermatrix $\cmP$}
 initialization $\omega_i = 1$ for $i=1,2,\cdots, n$\;
 \For{$k = m:1$}{
 $\eta_i = \sum_{\ell=1}^{n}\omega_{\ell}|H_{i,\ell}|$ \quad for $i=1,2,\cdots,n$\;
  $\cP^{(k)}_{i,j} = \omega_{j}|H_{i,j}|/\eta_i$
  \quad for $i,j = 1,2,\cdots, n$\;
  $\omega_i = \eta_i$ \quad for $i=1,2,\cdots,n$
 }
 \caption{Compute transition hypermatrix}
\end{algorithm}

\vspace{4pt}
It is worth noting that Algorithm~\ref{alg_1} only takes linear time in  the number
of non-zeros in the matrix $\mH$ in each iteration. Also the output result of the transition hypermatrix
is compatible with different values of $m$, which means that $m$ does not need to be pre-selected to run
the algorithm. In other words, we can stop the iteration anytime we want and still get the output
hypermatrix for some smaller $m$. This is useful when we later discuss how to choose the value of $m$, as it turns out that we can set a criterion to stop the iteration.
Lastly we see that the output transition hypermatrix $\cmP$ is only determined by $\mH$. So the procedure of computing $\cmP$ is similar to loading the matrix into the memory as they both only need to be done once for different problems (i.e., different $\vh$ and $\vb$). On the other hand, this means that we need global computation to compute this sequence and this choice prohibits a purely local algorithm.

\begin{theorem}
\label{the_mini}
Let $\cmP$ be the output of Algorithm~\ref{alg_1}, then $\tilde{\mH}$ defined in Theorem~\ref{the_var} has reached its minimal infinity norm. 
\end{theorem}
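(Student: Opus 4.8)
The plan is to turn the quantity $\normof{\tilde{\mH}}$ into a backward vector recursion and then show that Algorithm~\ref{alg_1} realizes, at every step, the pointwise-optimal recursion. Since each $\hat{\mH}^{(\ell)}$ is entrywise nonnegative, so is $\tilde{\mH} = \hat{\mH}^{(1)}\cdots\hat{\mH}^{(m)}$, and therefore $\normof{\tilde{\mH}} = \max_i (\tilde{\mH}\mathbf{1})_i$, where $\mathbf{1}$ is the all-ones vector. I would introduce the backward partial products $\vu^{(m)} = \mathbf{1}$ and $\vu^{(k-1)} = \hat{\mH}^{(k)}\vu^{(k)}$, so that $\normof{\tilde{\mH}} = \normof{\vu^{(0)}}$ and each coordinate satisfies $u^{(k-1)}_i = \sum_{j:\,H_{i,j}\neq 0} (H^2_{i,j}/\cP^{(k)}_{i,j})\,u^{(k)}_j$. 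The optimization is over all transition hypermatrices $\cmP$ whose slices are row-stochastic and respect the sparsity pattern of $\mH$.

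Second, I would identify the optimal one-step backward map. For a fixed nonnegative vector $\vv$, minimizing $\sum_j (H^2_{i,j}/\cP^{(k)}_{i,j})\,v_j$ over the probability row $\cP^{(k)}_{i,\cdot}$ is Lemma~\ref{lem_bound} applied to the reweighted vector with entries $|H_{i,j}|\sqrt{v_j}$: the minimum equals $\big(\sum_j |H_{i,j}|\sqrt{v_j}\big)^2$ and is attained at $\cP^{(k)}_{i,j} \propto |H_{i,j}|\sqrt{v_j}$. Writing $T(\vv)_i = \big(\sum_j |H_{i,j}|\sqrt{v_j}\big)^2$ for this map, the two properties I need are that $T$ is coordinatewise monotone in $\vv$ and that the minimizing row depends only on index $i$, so the rows decouple.

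Third comes the lower bound, by backward induction. Put $\bar{\vu}^{(m)} = \mathbf{1}$ and $\bar{\vu}^{(k-1)} = T(\bar{\vu}^{(k)})$. I claim every valid $\cmP$ yields $\vu^{(k)} \geq \bar{\vu}^{(k)}$ entrywise: assuming $\vu^{(k)} \geq \bar{\vu}^{(k)}$, nonnegativity of the coefficients gives $u^{(k-1)}_i \geq \sum_j (H^2_{i,j}/\cP^{(k)}_{i,j})\,\bar{u}^{(k)}_j \geq T(\bar{\vu}^{(k)})_i = \bar{u}^{(k-1)}_i$, the second inequality being the definition of $T$ as the row minimum. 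Hence $\normof{\tilde{\mH}} = \normof{\vu^{(0)}} \geq \normof{\bar{\vu}^{(0)}}$ for every $\cmP$. I would then verify that Algorithm~\ref{alg_1} attains this bound by proving, by induction on the loop, the invariant that the variable $\omega$ at the start of the iteration with index $k$ equals $\sqrt{\bar{\vu}^{(k)}}$ coordinatewise (initially $\omega = \mathbf{1} = \sqrt{\bar{\vu}^{(m)}}$). Granting the invariant, $\eta_i = \sum_j \omega_j|H_{i,j}| = \sqrt{T(\bar{\vu}^{(k)})_i} = \sqrt{\bar{u}^{(k-1)}_i}$, so the assignment $\cP^{(k)}_{i,j} = \omega_j|H_{i,j}|/\eta_i$ is exactly the optimal row, the update $\omega_i \leftarrow \eta_i$ propagates the invariant, and the induced backward product equals $\bar{\vu}^{(k-1)}$. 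Therefore $\vu^{(0)} = \bar{\vu}^{(0)}$ and $\normof{\tilde{\mH}}$ is minimized.

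The main obstacle, and the step I would treat most carefully, is justifying that greedily minimizing each backward step---indeed each row independently---minimizes the final quantity $\max_i u^{(0)}_i$, and not merely some intermediate row sum. The resolution is the monotonicity of $T$ together with the row-decoupling: pointwise minimization at every stage produces a vector $\bar{\vu}^{(0)}$ lying entrywise below the $\vu^{(0)}$ of any competitor, which \emph{a fortiori} bounds the maximum. The only other bookkeeping needing attention is the square-root correspondence $\omega = \sqrt{\bar{\vu}^{(k)}}$, since the algorithm stores the square roots of the optimal backward products rather than the products themselves.
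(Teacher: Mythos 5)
Your proof is correct and takes essentially the same route as the paper's: both reduce $\normof{\tilde{\mH}}$ to the row-sum vector $\tilde{\mH}\ve$, optimize each slice in a backward pass by applying Lemma~\ref{lem_bound} to the reweighted entries $|H_{i,j}|\sqrt{v_j}$, and keep square roots of the optimal backward products as the working variables (your invariant $\bm{\omega}=\sqrt{\bar{\vu}^{(k)}}$ is precisely the paper's identity $\big((\eta^{(k)}_1)^2,\cdots,(\eta^{(k)}_n)^2\big)^T=\hat{\mH}^{(k)}\cdots\hat{\mH}^{(m)}\ve$). Your version is, if anything, slightly more careful: the explicit entrywise bound $\vu^{(k)}\geq\bar{\vu}^{(k)}$ for every competing $\cmP$, justified by monotonicity of $T$, makes rigorous the global-optimality claim that the paper's induction (phrased as ``changing any single $\mP^{(\ell)}$ cannot decrease the norm'') leaves somewhat implicit.
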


\begin{proof}


We use matrices $\mP^{(i)}$ for $i=1,2,\cdots,m$ to denote matrix slices of hypermatrices $\cmP$ from the output of Algorithm~\ref{alg_1}, and $\eta^{(k)}_i$ for the value of $\eta_i$ at the $k$th iteration.

We first prove that the value of $\normof{\tilde{\mH}}$
cannot be further decreased by changing $\mP^{(m)}$. Since
$\tilde{\mH}$ is a nonnegative matrix, $\normof{\tilde{\mH}} = \normof{\tilde{\mH}\ve}$ holds, where $\ve\in \mathbb{R}^n$  and $e_i=1$ for all $i=1,2,\cdots,n$. The $k$th element of $\hat{\mH}^{(m)}\ve$ is $\sum_{j=1}^{n}H^2_{k,j}/\cP^{(m)}_{k,j}$, and according to Lemma~\ref{lem_bound}, this value is minimized when $\cP^{(m)}_{k,i} = |H_{k,i}|/\sum_{j=1}^{n}|H_{k,j}|$, which is exactly the $k$th row of $\mP^{(m)}$ from the algorithm. Thus by changing $\mP^{(m)}$, we cannot decrease any elements of vector $\hat{\mH}^{(m)}\ve$, and $\normof{\tilde{\mH}\ve}$ will not decrease.

Second we prove that $\big( (\eta^{(m)}_1)^2, (\eta^{(m)}_2)^2,\cdots,(\eta^{(m)}_n)^2 \big)^T= \hat{\mH}^{(m)}\ve $. Because $\cmP^{(m)}$ is constructed as $\cP^{(m)}_{k,i} = |H_{k,i}|/\sum_{j=1}^{n}|H_{k,j}|$, which means the $k$th element of
$\hat{\mH}^{(m)}\ve$ is $(\sum_{j=1}^{n}|H_{k,j}|)^2 = (\eta^{(m)}_k)^2$.

Lastly we use mathematical induction and assume that we cannot decrease $\normof{\tilde{\mH}}$ by changing $\mP^{(\ell)}$, and $\big( (\eta^{(\ell)}_1)^2, (\eta^{(\ell)}_2)^2,\cdots,(\eta^{(\ell)}_n)^2 \big)^T= \hat{\mH}^{(\ell)}\cdots\hat{\mH}^{(m)}\ve $ for $ r+1 \leq \ell\leq m$. Then similarly we prove that the statement holds for $\mP^{(r)}$. 
We notice that the $k$th element of $\hat{\mH}^{(r)}\hat{\mH}^{(r+1)}\cdots\hat{\mH}^{(m)}\ve$ is
$\sum_{j=1}^{n} (H_{k,j}\eta^{(r+1)}_{j})^2/\cP^{r}_{k,j}$ and it is minimized because $\cmP^{(r)}$ is computed
as
\begin{equation}
\label{equ_prij}
\cP^{(r)}_{i,j} = \eta^{(r+1)}_{j}|H_{i,j}|/\sum_{k=1}^{n}\eta^{(r+1)}_{k}|H_{i,k}|.
\end{equation}
So no elements of the vector $\hat{\mH}^{(r)}\hat{\mH}^{(r+1)}\cdots\hat{\mH}^{(m)}\ve$ will decrease in value and neither will norm $\normof{\tilde{\mH}}$ if we change $\mP^{(r)}$. From
formula~\eqref{equ_prij} we can compute the $k$th element of $\hat{\mH}^{(r)}\hat{\mH}^{(r+1)}\cdots\hat{\mH}^{(m)}\ve$ as $(\sum_{j=1}^{n}\eta^{(r+1)}_{j}|H_{k,j}|)^2 = (\eta^{(r)}_{k})^2$.
So we have proved that this induction statement also holds for $\ell = r$. In conclusion the output hypermatrix $\cmP$ from Algorithm~\ref{alg_1} will ensure $\normof{\tilde{\mH}}$ to be minimized.
\end{proof}

The standard $1-$way method can also be viewed as a special case of $m-$way random walk, with the $m$ transition matrices being the same. However the $1-$way method generally does not minimize $\normof{\tilde{\mH}}$ in the $m-$way setting
as Algorithm~\ref{alg_1} minimizes $\normof{\tilde{\mH}}$. Formula
\eqref{equ_variance} from Theorem~\ref{the_var} indicates the connection between the variance and the power series of $\tilde{\mH}$. Since $\normof{\tilde{\mH}}$ is
an upper-bound of $\rho(\tilde{\mH})$ and $\rho(\tilde{\mH})$ affects how big this
power series will grow, we can see that the $m-$way random walk with transition hypermatrix defined from Algorithm~\ref{alg_1} has the tendency to decrease the variance compared to the standard $1-$way method. Although the above analysis does not ensure a smaller variance for the $m-$way
method, numerical experiments in both synthetic matrices and matrices in real applications support this conjecture. (See Section~\ref{sec_exp}).

Next we move to see how the spectral radius $\rho(\tilde{\mH})$ is related to the matrix $\mH$. In order to bound $\rho(\tilde{\mH})$, the standard Markov random walk
requires $\normof{\mH}<1$, which does not happen often from our early analysis. The following theorem
states the necessary and sufficient condition for a $m-$way Markov random walk to have
$\rho(\tilde{\mH}) < 1$.

\begin{theorem}
\label{the_ifonlyif}
Let $\mH^{+}$ denote the nonnegative matrix where $H^{+}_{i,j} = |H_{i,j}|$.
There exists a $m-$way Markov random walk transition hypermatrix $\cmP$ such $\normof{\tilde{\mH}} < 1$
if and only if $\rho(\mH^{+}) < 1$.
\end{theorem}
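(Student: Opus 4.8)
The plan is to reduce the statement to a clean formula for the \emph{minimal} achievable value of $\normof{\tilde{\mH}}$ over all valid $m$-way transition hypermatrices, and then invoke Gelfand's formula for the spectral radius. The crucial observation, which is implicit in the proof of Theorem~\ref{the_mini}, is that when $\cmP$ is the output of Algorithm~\ref{alg_1} the auxiliary vectors satisfy $\eta^{(k)} = (\mH^{+})^{m-k+1}\ve$ (reading off the recursion $\eta = \mH^{+}\omega$ with $\omega$ initialized to $\ve$), and that $\tilde{\mH}\ve$ is the entrywise square of $\eta^{(1)} = (\mH^{+})^{m}\ve$. First I would make this explicit: since $\tilde{\mH}$ is nonnegative, $\normof{\tilde{\mH}} = \normof{\tilde{\mH}\ve}$, and combining the two facts gives
\[
\min_{\cmP}\normof{\tilde{\mH}} \;=\; \max_{1\le i\le n}\big((\mH^{+})^{m}\ve\big)_i^{2} \;=\; \big(\normof{(\mH^{+})^{m}}\big)^{2},
\]
where the last equality uses that $(\mH^{+})^{m}$ is nonnegative, so its infinity norm equals the largest entry of $(\mH^{+})^{m}\ve$. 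By Theorem~\ref{the_mini} this is the smallest value of $\normof{\tilde{\mH}}$ attainable for a fixed $m$, so a valid $\cmP$ with $\normof{\tilde{\mH}}<1$ exists exactly when $\normof{(\mH^{+})^{m}}<1$.

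Given this reduction, both directions follow from the relationship between $\normof{(\mH^{+})^{m}}$ and $\rho(\mH^{+})$. For the ``if'' direction, suppose $\rho(\mH^{+})<1$. By Gelfand's formula $\lim_{m\to\infty}\normof{(\mH^{+})^{m}}^{1/m}=\rho(\mH^{+})<1$, so there is some $m$ with $\normof{(\mH^{+})^{m}}<1$; running Algorithm~\ref{alg_1} for that $m$ produces a hypermatrix with $\normof{\tilde{\mH}}=\big(\normof{(\mH^{+})^{m}}\big)^{2}<1$. For the ``only if'' direction, suppose some $m$-way walk achieves $\normof{\tilde{\mH}}<1$. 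Since the minimum over all valid $\cmP$ is no larger, $\big(\normof{(\mH^{+})^{m}}\big)^{2}<1$, hence $\normof{(\mH^{+})^{m}}<1$, and therefore $\rho(\mH^{+})^{m}=\rho\big((\mH^{+})^{m}\big)\le\normof{(\mH^{+})^{m}}<1$, giving $\rho(\mH^{+})<1$.

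The genuinely new content is the minimal-norm formula; once it is in hand the theorem is essentially a restatement of the standard characterization ``$\rho(A)<1$ iff $\normof{A^{m}}<1$ for some $m$''. The main obstacle I anticipate is bookkeeping rather than depth: I must verify carefully that the recursion in Algorithm~\ref{alg_1} telescopes to $\eta^{(1)}=(\mH^{+})^{m}\ve$ and that the entrywise-square identity $\tilde{\mH}\ve = \big((\mH^{+})^{m}\ve\big)^{\!\circ 2}$ holds, which is exactly the induction carried out in the proof of Theorem~\ref{the_mini}, so I can lean on it. I would also take care to state the quantifier on $m$ precisely: the existence assertion is ``there exist an integer $m$ and an $m$-way $\cmP$'', and the forward direction genuinely needs $m$ to be allowed to grow, since for small $m$ one may well have $\normof{(\mH^{+})^{m}}\ge 1$ even when $\rho(\mH^{+})<1$.
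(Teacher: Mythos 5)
Your proposal is correct and follows essentially the same route as the paper's proof: both reduce to the output of Algorithm~\ref{alg_1} via the optimality in Theorem~\ref{the_mini}, use the identity that $\tilde{\mH}\ve$ is the entrywise square of $(\mH^{+})^{m}\ve$ (the paper's equation~\eqref{equ_eta}), and then conclude with Gelfand's formula for the ``if'' direction and the bound $\rho\big((\mH^{+})^{m}\big) \le \normof{(\mH^{+})^{m}}$ for the ``only if'' direction. Your write-up is somewhat more explicit than the paper's in stating the minimal-norm formula $\min_{\cmP}\normof{\tilde{\mH}} = \big(\normof{(\mH^{+})^{m}}\big)^{2}$ and in pinning down the existential quantifier on $m$, but these are presentational refinements rather than a different argument.
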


\begin{proof}
If there exists a $m-$way Markov random walk transition hypermatrix $\cmP$ such that $\normof{\tilde{\mH}} < 1$, without a loss of generality we assume $\cmP$ is the output from Algorithm~\ref{alg_1} since Theorem~\ref{the_mini} states that it minimize $\normof{\tilde{\mH}}$. From the proof of 
Theorem~\ref{the_mini} we have:
\begin{equation}
\label{equ_eta}
\normof{\tilde{\mH}} = \normof{\tilde{\mH}\ve} = 
\normof{\hat{\mH}^{(1)}\hat{\mH}^{(2)}\cdots\hat{\mH}^{(m)}\ve}
= \normof{\big( (\eta^{(1)}_1)^2, (\eta^{(1)}_2)^2,\cdots,(\eta^{(1)}_n)^2 \big)^T}.
\end{equation}
According to the computing procedure of Algorithm~\ref{alg_1} we have $( \eta^{(\ell)}_1, \eta^{(\ell)}_2,\cdots,\eta^{(\ell)}_n )^T = (\mH^{+})^m\ve$. So $\normof{\tilde{\mH}} < 1 \Longrightarrow \normof{(\mH^{+})^m\ve} < 1
\Longrightarrow \normof{(\mH^{+})^m} < 1
\Longrightarrow \rho(\mH^{+}) < 1$.

If we have $\rho(\mH^{+}) < 1$, from Gelfand's Formula, we have $\rho(\mH^{+})  = 
\lim_{k\rightarrow \infty}\normof{(\mH^{+})^k}^{1/k}$. Then we can find a sufficient large number $m$ s.t.
for any $k\geq m$ the inequality $\normof{(\mH^{+})^k}^{1/k} < 1$ holds. Let $\tilde{\mH}$
be the matrix based on the transition hypermatrix output from Algorithm~\ref{alg_1}. Based on
the observation of~\eqref{equ_eta}, we have 
$
\rho(\mH^{+}) < 1
\Longrightarrow \normof{(\mH^{+})^m}<1
\Longrightarrow \eta^{(1)}_{i} < 1, i=1,2,\cdots,n
\Longrightarrow \normof{\tilde{\mH}}<1
$
\end{proof} 

Theorem~\ref{the_ifonlyif} creates an equivalent link between $\rho(\mH^{+}) < 1$
and existence of $m-$way Markov random walk such that $\normof{\tilde{\mH}} < 1$. However it does not guarantee
the size of $m$. In another words one can always cook up some matrix
$\mH$ with $\rho(\mH^{+}) < 1$ but make $m$ arbitrarily large. Although these
extreme cases
are not our primary focus in this paper, we point it out for the discussion of the practical implementation of Algorithm~\ref{alg_1}. In order to find the transition hypermatrix $\cmP$
with $\normof{\tilde{\mH}} < 1$, we can set a threshold number $\phi_{\max}$, and let $m$ grow
until we have $\eta_i<1$ for all $i=1,2,\cdots,n$ or $m = \phi_{\max}$. As stated before, we do
not need to re-run the algorithm for different value of $m$, because the way Algorithm
\ref{alg_1} computes the transition hypermatrix is compatible with different values
of $m$.

\subsection{Random Walk Error Analysis}
To practically estimate the value $\langle \vh,\vx \rangle$ from simulating the value of $Z$,
we need to truncate the multi-way Markov random walk in order for it to end after some large
number of steps $N$. The practical solution~\cite{dimov1998new,benzi13analysis} to determine $N$ is through the 
criterion: $\lvert W_{N}\rvert \leq \epsilon \lvert W_0 \rvert$ where $\epsilon>0$ denotes some small number. For the case that the initial probability $p_i = |h_i|/
\sum_{j=1}^{n}|h_j|$, we have $W_0 = \normof{\vh}$.

We notice that $W_N$ is a random variable, and
follow the similar analysis with that in Theorem~\ref{the_exp}, it is easy to see its expected value is
$\langle \vh, (\mH^{+})^{N}\ve \rangle$. So $\rho(\mH^{+}) < 1$ is a necessary condition in 
order to determine the truncation number $N$. Here we can see that our $m-$way Markov random
walk has the minimal requirements on $\mH$, because $\rho(\mH^{+}) < 1$ is required for all the Monte Carlo frameworks to be able to truncate the random walk, and yet we show that under this condition, our algorithm can always find a $m-$way transition Hypermatrix
to ensure $\normof{\tilde{\mH}} < 1$.

The following theorem justifies that the truncation procedure has little effect on the estimation result or the variance of the variable.

\begin{theorem}
\label{the_trun}
Let $Z_N$ denote the truncation value of $Z$ after $N$ steps of the random walk. Formally
$Z_N = \sum_{\ell = 0}^{N} W_{\ell}b_{k_{\ell}}$ with $W_{\ell}, \ell = 0,1,2,\cdots,N$ defined in equation~\eqref{equ_weight}. 
If $\normof{\tilde{\mH}} < 1$ then $Z_N$
converges in probability to $Z$: $Z_N \stackrel{\mathclap{\small\normalfont\mbox{$p$}}}{\longrightarrow} Z$, and $\Var[Z_N]$ converges to $\Var[Z]$ as $N \rightarrow \infty$.
\end{theorem}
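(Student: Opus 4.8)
The plan is to prove the stronger statement that $Z_N \to Z$ in $L^2$, i.e. $\E[(Z - Z_N)^2] \to 0$, and then deduce both claimed conclusions from it. Since $\normof{\tilde{\mH}} < 1$ implies $\rho(\tilde{\mH}) < 1$, Corollary~\ref{cor_1} already guarantees $\E[Z^2] = \Var[Z] + \langle\vh,\vx\rangle^2 < \infty$, so $Z\in L^2$ and the two series computing $\E[Z^2]$ in the proof of Theorem~\ref{the_var} converge.

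First I would write $Z - Z_N = \sum_{\ell = N+1}^\infty W_\ell b_{k_\ell}$ and expand
\[
\E[(Z-Z_N)^2] = \sum_{\ell = N+1}^\infty \E[W_\ell^2 b_{k_\ell}^2] + 2 \sum_{N < \ell < r} \E[W_\ell W_r b_{k_\ell} b_{k_r}].
\]
The key observation is that these are exactly the tails, starting from index $N+1$, of the two series computed in~\eqref{equ_sumwb} and~\eqref{equ_wwbb}. Re-running those same computations with the summation index restricted to $\ell \geq N+1$ identifies the first sum with $\sum_{\ell > N}\langle\hat{\vh}, \hat{\mH}^{(1)}\cdots\hat{\mH}^{(\ell)}\Diag(\vb)\vb\rangle$ and the second with $\sum_{\ell > N}\langle\hat{\vh}, \hat{\mH}^{(1)}\cdots\hat{\mH}^{(\ell)}\Diag(\vb)\mH\vx\rangle$ (the inner sum over $r>\ell$ still telescopes to $\mH\vx$ via the Neumann series). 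Both are tails of series whose full sums are finite by Corollary~\ref{cor_1}, hence each tends to $0$ as $N\to\infty$, giving $\E[(Z-Z_N)^2]\to 0$.

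From $L^2$ convergence the two conclusions follow quickly. For convergence in probability, Markov's inequality gives $\Pr(|Z - Z_N| > \varepsilon) \leq \E[(Z-Z_N)^2]/\varepsilon^2 \to 0$ for every $\varepsilon > 0$, which is precisely $Z_N \stackrel{p}{\to} Z$. For the variance, $L^2$ convergence yields $\E[Z_N^2]\to\E[Z^2]$ by continuity of the $L^2$ norm, while $\E[Z_N] = \sum_{\ell=0}^N \langle\vh,\mH^\ell\vb\rangle \to \langle\vh,\vx\rangle = \E[Z]$ by the Neumann series (convergent since $\rho(\mH) < 1$); combining these, $\Var[Z_N] = \E[Z_N^2] - (\E[Z_N])^2 \to \E[Z^2] - (\E[Z])^2 = \Var[Z]$.

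The main obstacle I anticipate is rigor in the signed case: when $\mH$ or $\vb$ has negative entries the cross-terms $W_\ell W_r b_{k_\ell}b_{k_r}$ are not sign-definite, so Tonelli's theorem does not directly license the reindexing and splitting-off of the tail. I would handle this exactly as in the remark following Theorem~\ref{the_var}: the corresponding series formed with $|H_{i,j}|$ and $|b_i|$ is nonnegative and, under $\rho(\tilde{\mH}) < 1$, sums to a finite value, which supplies the absolute convergence needed to invoke Fubini's theorem. Once that interchange is justified, matching the tail against~\eqref{equ_sumwb} and~\eqref{equ_wwbb} and bounding it by a convergent tail is purely mechanical.
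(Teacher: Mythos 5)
Your proposal is correct, but it takes a genuinely different (and substantially more rigorous) route than the paper, whose entire proof is the single sentence that since $Z = \lim_{N\rightarrow\infty} Z_N$ by definition, ``the conclusions can be easily verified.'' What you do differently is prove $L^2$ convergence: you identify $\E[(Z-Z_N)^2]$ with the tails of the series computed in \eqref{equ_sumwb} and \eqref{equ_wwbb}, which vanish as $N\rightarrow\infty$ because the full sums are finite under $\normof{\tilde{\mH}}<1$ (via Corollary~\ref{cor_1}), and then you deduce convergence in probability from Chebyshev's inequality and variance convergence from continuity of the $L^2$ norm plus $\E[Z_N]\rightarrow\E[Z]$. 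This buys something real: the paper's appeal to the definition only gives almost-sure (pointwise) convergence of $Z_N$ to $Z$, which does yield $Z_N \rightarrow Z$ in probability but does \emph{not} by itself imply $\Var[Z_N]\rightarrow\Var[Z]$ --- some domination or uniform-integrability argument is required, and your tail estimate is exactly that missing argument (it even gives a geometric rate, roughly $\normof{\tilde{\mH}}^{\lfloor N/m\rfloor}$). One caveat to tighten in your Fubini step for the signed case: the dominating series built from $|H_{i,j}|$ and $|b_i|$ contains the inner Neumann series $\sum_{r>\ell}(\mH^{+})^{r-\ell}$, so its finiteness needs $\rho(\mH^{+})<1$ in addition to the finiteness of the outer series in $\tilde{\mH}$; under the stated hypothesis this does hold, since by Theorem~\ref{the_mini} the minimal infinity norm over hypermatrices equals $\normof{(\mH^{+})^m}$, whence $\normof{(\mH^{+})^m}\leq\normof{\tilde{\mH}}<1$ and so $\rho(\mH^{+})<1$ --- but this implication is worth stating explicitly rather than citing the remark after Theorem~\ref{the_var}, which asserts the Fubini claim without this justification.
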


\begin{proof}
From the definition of the variable $Z = \lim_{N\rightarrow \infty} Z_N$, the conclusions can be easily verified.
\end{proof}

In addition to the truncation, another error comes from the simulation procedure when using
the empirical mean value of $Z$ to estimate $\E[Z]$, formally we define the probable error as:
\begin{equation*}
r = \sup\Big\{ s: \text{Pr}\big( |\bar{Z} - \E[Z] | \geq s\big)> \frac{1}{2} \Big\}
\end{equation*}
where $\bar{Z} = \sum_{i=1}^{M}Z^{(i)}/M$ denotes the mean value of $M$ simulations
$Z^{(1)},Z^{(2)},\cdots,Z^{(M)}$.

There is a close link between the probable error and the variance of the random variable. According 
to Central Limit Theorem
\begin{equation*}
\sqrt{M}\big(|\bar{Z} - Z|\big) \stackrel{\mathclap{\small\normalfont\mbox{$d$}}}{\longrightarrow} 
\mathcal{N}\big(0,\Var[Z]\big)
\end{equation*}
where $\mathcal{N}\big(0,\Var[Z]\big)$ denotes the normal distribution with zero mean and variance $\Var[Z]$, and the symbol $\stackrel{\mathclap{\small\normalfont\mbox{$d$}}}{\longrightarrow} $ means convergence in distribution. When $M$ is sufficiently large, $r \approx 0.6745 \sqrt{\Var[Z]/M}$.

The probable error is determined by the ratio of the variance to the number of simulations. If the variance is decreased by $\xi$ times, then it only require $\xi$ times fewer number of simulations
to get to the same precision (i.e., probable error). 

Based on the above observations we can conduct
numerical experiments to compare the variance between the standard Monte Carlo method and our
multi-way Monte Carlo method, and the ratio between the variance can demonstrate how many times
faster our new method can get.

\section{Numerical Experiments}
\label{sec_exp}

In this section, we conduct numerical experiments to demonstrate the two key improvements from our new method~\footnote{Codes for this paper are available at \url{https://github.com/wutao27/multi-way-MC}}. In section \ref{sec_compare1} we show that our
new method can be applied solve more problems than the standard method.
In section \ref{sec_compare2} we show that our new method can achieve a considerable
speed-up. The testing methods are the standard $1-$way method, and our multi-way methods with $m=2,3,4,5$.
In both experiments, synthetic matrices and real-world matrices are used.

For synthetic data, we generate the matrix $\mH$ via Matlab command $\texttt{sprand}(1000,1000,0.2)$, which outputs
a $1,000$ by $1,000$ matrix with $0.2$ of its entries being non-zeros, and each non-zero is a random number following uniform distribution between $(0,1)$. The synthetic matrices are rescaled to reach certain spectral radius required during experiments. Formally to get a spectral radius $0<r<1$: $\mH \leftarrow r\mH/\rho(\mH)$. Each result is the average over 100 trials for the related problems.

For real world matrices, we focus on the Harwell-Boeing sparse matrix collection~\cite{duff1992users,davis2011university}. The matrix $\mH$ is constructed by a simple diagonal precondition on the original matrix $\mA$ from the collection: $\mH = \mI - \Diag(\mA)^{-1}\mA$. And for the test problems we only consider the matrices that have $\rho(\mH^{+}) < 1$. In the interest of simplicity, we only use problems with fewer than 5,000 dimensions.

In both synthetic and real world experiments, vectors $\vb,\vh$ are randomly generated with elements following uniform distribution between $(0,1)$.

\subsection{The Number of Solvable Problems}
\label{sec_compare1}
We define the solvable problems as those with $\normof{\tilde{\mH}} < 1$, which is a sufficient condition that guarantees the convergence of the Monte Carlo methods. The ratio of solvable problems is the percentage of random problems that are solvable. Figure \ref{fig_ratio} shows the results as we vary the spectral radius. As we can see our multi-way methods can solve more problems than the standard method, which cannot guarantee any convergences when $\rho(\mH^{+}) \geq 0.85$. And when $m$ increases, even more problems  are solvable. We also find several real world matrices where the standard
method fails to guarantee convergence but ours can. They are matrices \textit{fs\_760\_1, jpwh\_991, nos7} from the Harwell-Boeing Collection and \textit{add32} from Hamm matrix group\footnote{\url{http://www.cise.ufl.edu/research/sparse/matrices/Hamm/index.html}}.

\begin{figure}[htb]
\begin{floatrow}
\ffigbox{%
  \includegraphics[trim={2.3cm 7.5cm 2.3cm 7.5cm}, width=0.82\linewidth]{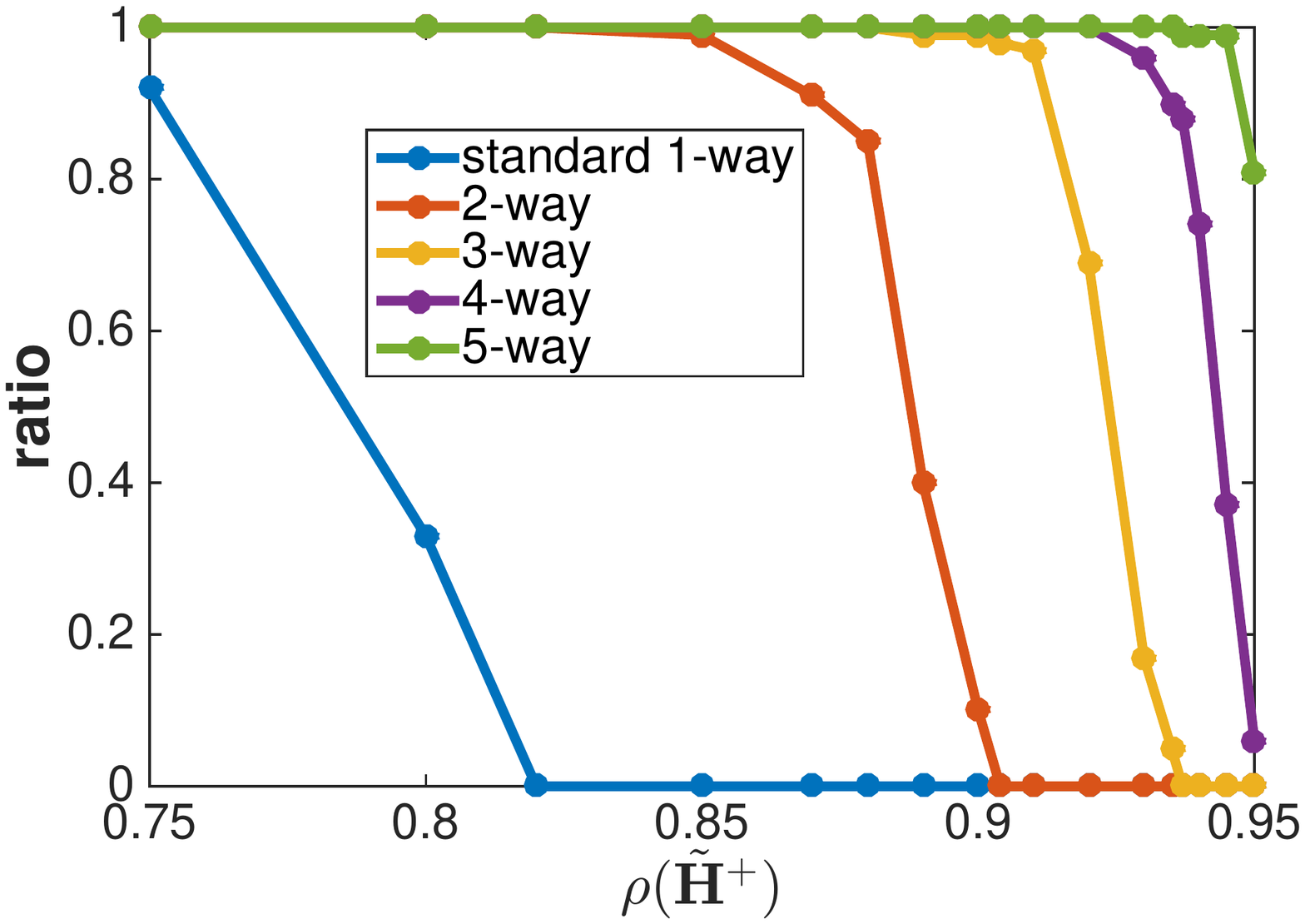}%
}{%
  \caption{The results of the standard $1-$way method and our multi-way methods with $m=2,3,4,5$ for the ratio of solvable synthetic problems vs the spectral radius $\rho(\tilde{\mH^+})$.
  }\label{fig_ratio}%
}
\capbtabbox{%
  \begin{tabularx}{0.98\linewidth}{ >{\hsize=1.12\hsize}X >{\hsize=0.95\hsize}Z >{\hsize=0.95\hsize}Z >{\hsize=0.95\hsize}Z>{\hsize=0.9\hsize}Z}
\toprule
 & $2-$way & $3-$way & $4-$way & $5-$way  \\
\cmidrule{2-5}
 $\rho(\mH^+)$ &\multicolumn{4} {>{\hsize=4\hsize}Z}{Synthetic Matrices} \\
\midrule
$0.8$  & $1.09$ & $1.13$ & $1.14$ & $1.15$ \\
$0.9$ & $1.38$ & $1.58$ & $1.69$ & $1.77$\\
$0.95$ & $1.75$ & $2.30$ & $2.73$ & $3.06$\\
$0.99$ & $2.40$ & $3.77$ & $5.10$ & $6.39$\\
\midrule
&\multicolumn{4} {>{\hsize=4\hsize}Z}{Harwell-Boeing Collection} \\
\midrule
 & $1.20$ & $1.44$ & $1.59$ & $1.74$\\
\bottomrule
\end{tabularx}
}{%
  \caption{The speed-up times by our multi-way methods with $m=2,3,4,5$
  compared to the standard $1-$way method on synthetic problems and the Harwell-Boeing collection. $r$ denotes $\rho(\mH^{+})$.}\label{table_ratio}
}
\end{floatrow}
\end{figure}

\subsection{Algorithm Efficiency}
\label{sec_compare2}
We apply the conclusion in Theorem \ref{equ_variance} to compute $\Var[Z]$ for all the testing methods. Then we compare $\Var[Z]$ for different methods. Formally we define speed-up times as $\Var[X]/\Var[Z]$, where
$X$ and $Z$ denote the variable from the standard $1-$way method and our method respectively. The speed-up times is an indicator for how
much times faster our multi-way methods can get compared to the standard $1-$way method.
Table \ref{table_ratio} shows that we have considerable speed-up when applying our multi-way methods.
Note that we only consider the problems with $\rho(\tilde{\mH}) < 1$ in order for $\Var[Z] < \infty$. In the Harwell-Boeing collection there are a few problems having $\mH^{+}\ve$ equal for each element, we exclude these matrices because the multi-way method is equivalent to the standard method, as we now show. 

When $\mH^{+}\ve$ is a vector with its elements being the same number, that is, $\mH^{+} \ve = \gamma \ve$, the multi-way method is equivalent to the standard method. This occurs because, as in 
Algorithm~\ref{alg_1}, the vectors $\bm{\eta} = (\mH^{+})^{k}\ve$ and $\bm{\omega} = (\mH^{+})^{k-1}\ve$ will have their elements be the same. So the transition matrices $\cmP^{(i)}$ for $i=1,2,\cdots,m$ will also be the same.
In our experiments on the Harwell-Boeing collection, we do find several matrices that have this property, so in this case our multi-way method is equivalent to the standard method. 

Among our testing problems, there is only one~\footnote{\url{http://www.cise.ufl.edu/research/sparse/matrices/HB/fs_760_1.html}} that our multi-way method can have a larger variance than that of the standard method. Actually we find the matrix $\mH$ for this problem is outside our assumptions in this paper. We assume that $\mH$ does not have zero row in order to assign transition probabilities for each state. For the corner case that $\mH$ does have zero
rows, the linear system can be easily adjusted by deleting the zero rows of $\mH$. For this testing problem $\mH$, the row sums of $\mH^{+}$ distribute in a drastic way. Over half of the rows have sum values between $10^{-17}$ to $10^{-6}$, and quite a few ``big'' rows have sums larger than $10^{3}$. So this matrix have many rows that are nearly zero. For all the other testing problems, our multi-way method can achieve smaller variances than the standard method, and the speed-up times in shown in Table~\ref{table_ratio}.


\section{Conclusion}
\label{sec_con}
In this paper we studied a generalization of Monte Carlo methods for linear systems.
The generalization allows the Markov random walk to transition using a set of matrices. We derived the variance of the resulting estimator and construct the matrices in a way to attempt to produce a finite variance.  The advantages of this new random walk procedures are two-fold. First it can solve more problems that the standard method fails to solve.
Second our new method has the tendency to decrease the variance
thus decrease the computations needed for estimate the solution.
Numerical experiments on both synthetic and real world matrices confirm the
superiority of our method in the above two aspects when comparing to the standard Monte Carlo method. An open problem suggested by our work is to get a purely local method that avoids the global work in building the sequence of adjacency matrices.


\textbf{Acknowledgements.} This work was supported by NSF IIS-1422918,
CAREER award CCF-1149756, Center for Science of Information STC, CCF-093937; DOE award DE-SC0014543; and the DARPA SIMPLEX program.

\bibliographystyle{abbrv}
\bibliography{refs_mc}

\end{document}